\newtheorem{thm}{Theorem}[section]
\newtheorem{prop}[thm]{Proposition}
\newtheorem{lem}[thm]{Lemma}
\newtheorem{cor}[thm]{Corollary}
\theoremstyle{definition}
\newtheorem{defn}[thm]{Definition}
\newtheorem{ex}[thm]{Example}
\newcommand\ZZ{\mathbb{Z}}
\newcommand\AO{\mathcal{A}}
\newcommand\BB{\mathcal{B}}
\newcommand\CC{\mathcal{C}}
\newcommand\RR{\mathcal{R}}
\newcommand\oo{\mathfrak{o}}
\newcommand\VV{\mathbf{V}}
\newcommand\xx{\mathbf{x}}
\newcommand\bgamma{{\boldsymbol{\gamma}}}
\newcommand\qand{\quad\mbox{and}\quad}
\title[Acyclic orientation polynomials]{Acyclic orientation polynomials and the sink theorem for chromatic symmetric functions}
\author{Byung-Hak Hwang}
\address{Department of Mathematical Sciences, Seoul National University, Seoul,
South Korea}
\email{xoda@snu.ac.kr}
\author{Woo-Seok Jung}
\address{
Department of Mathematics, Sogang University, Seoul,
South Korea}
\email{jungws@sogang.ac.kr}
\author{Kang-Ju Lee}
\address{Department of Mathematical Sciences and Research Institute of Mathematics, Seoul
National University, Seoul, South Korea}
\email{leekj0706@snu.ac.kr}
\author{Jaeseong Oh}
\address{Department of Mathematical Sciences, Seoul National University, Seoul,
South Korea}
\email{jaeseong\_oh@snu.ac.kr}
\author{Sang-Hoon Yu}
\address{Department of Mathematical Sciences, Seoul National University, Seoul,
South Korea}
\email{ysh4017@snu.ac.kr}
\subjclass[2010]{05C31, 05C30, 05E05, 05C20, 05B35}
\keywords{acyclic orientations, the generating function for sinks, deletion-contraction recursion, the sink theorem, chromatic symmetric functions}
\begin{document}
\begin{abstract}
We define the acyclic orientation polynomial of a graph to be the generating function for the sinks of its acyclic orientations.
Stanley proved that the number of acyclic orientations is equal to the chromatic polynomial evaluated at $-1$ up to sign.
Motivated by this link between acyclic orientations and the chromatic polynomial, we develop ``acyclic orientation'' analogues of theorems concerning the chromatic polynomial of Birkhoff, Whitney, and Greene-Zaslavsky.
As an application, we provide a new proof for Stanley's sink theorem for chromatic symmetric functions $X_G$. This theorem gives a relation between the number of acyclic orientations with a fixed number of sinks and the coefficients in the expansion of $X_G$ with respect to elementary symmetric functions.
\end{abstract}

\maketitle

\section{Introduction}
The purpose of this paper is to introduce acyclic orientation polynomials, present several expressions for them, and give a new proof for Stanley's sink theorem using these expressions. Throughout this paper, let $G=(V,E)$ be a graph with $|V|=d$ vertices. We allow $G$ to contain multiple edges or loops. 

Our object of study is an \emph{acyclic orientation} of the graph $G$, an assignment of a direction to each edge such that the orientation induces no directed cycles. Denote by $\mathcal{A}(G)$ the collection of acyclic orientations of $G$.
The number of acyclic orientations of $G$ is a Tutte-Grothendieck invariant, i.e., this number obeys a \emph{deletion-contraction} recursion.
As a refinement of this quantity, we introduce the \emph{acyclic orientation polynomial} of $G$.
For $\mathfrak{o} \in \mathcal{A}(G)$, a vertex $v$ is called a \emph{sink} if the direction of each edge incident with $v$ is toward $v$.
Let $\operatorname{Sink}(\mathfrak{o})$ be the set of sinks of $\mathfrak{o}$.
We associate a formal variable to each vertex $v\in V$, and denote this variable by $v$ using the same notation for the vertex. Also we assume that these variables commute with each other, so $\ZZ[V]$ denotes the ring of polynomials in $V$ with integer coefficients.
The acyclic orientation polynomial of $G$ (Definition~\ref{def:AOP}) is defined to be
\[
A_G(V)=\sum_{\mathfrak{o}\in \mathcal{A}(G)}\prod_{v \in \operatorname{Sink}(\mathfrak{o})}{v} \in \ZZ[V].
\]
Specializing $v=t$ for all $v \in V$ in $A_G(V)$, we obtain the polynomial $a_G(t)$ whose coefficient of $t^j$ counts the number of acyclic orientations with $j$ sinks \cite{Sta95, Lass01}.

One might ask if there is a deletion-contraction recurrence for the acyclic orientation polynomial, as with the number of acyclic orientations. To answer this, we introduce the following assignment:
\begin{equation}\label{eq:intro_v_e}
v_e=u_1+u_2-u_1u_2, \mbox{ or } (1-v_e)=(1-u_1)(1-u_2), 
\end{equation}
as an element of $\ZZ[V]$. Here, $v_e$ denotes the vertex of the graph $G/e$ obtained from $G$ by contracting an edge $e=u_1u_2 \in E$. Using this assignment, we can regard the acyclic orientation polynomial of $G/e$ as a polynomial in $V$, and then we establish a deletion-contraction recurrence for the acyclic orientation polynomial (Theorem~\ref{thm:DC}).
This assignment leads us to the notion of a generalized deletion-contraction recurrence.
This generalized recurrence defines the \emph{$\VV$-polynomial} in a more general setting \cite{EMM11}, and then our polynomial $A_G(V)$ can be obtained by specializing the $\VV$-polynomial. See Appendix~\ref{sec:appendix} for details.

In \cite{Sta73}, Stanley showed that the number of acyclic orientations of $G$ is equal to $(-1)^d{\chi_G(-1)}$, where $\chi_G(n)$ is the chromatic polynomial of $G$. This result motivates us to develop ``acyclic orientation'' analogues of theorems concerning $\chi_G(n)$. Let us recall four famous expressions for the chromatic polynomial $\chi_G(n)$:
\begin{align*}
\chi_G(n)
&= \sum_{S \subseteq E}(-1)^{|S|} n^{c(S)} && \mbox{[subgraph expansion]} \\ 
&= \sum_{S \in \BB_G}(-1)^{|S|} n^{c(S)} && \mbox{{\cite[Whitney's Theorem]{Whi32}}} \\ 
&= \sum_{\pi \in L_G}\mu_G(\hat{0},\pi) n^{|\pi|} && \mbox{{\cite[Birkhoff's Theorem]{Bir12}}} \\ 
&= \sum_{\mathfrak{o} \in \mathcal{A}(G)}(-1)^{d-|\pi(\mathfrak{o})|} n^{|\pi(\mathfrak{o})|} && \mbox{{\cite[Corollary 7.4]{GZ83}}},
\end{align*}
where $S\subseteq E$ is a spanning subgraph of $G$, $|S|$ is the number of edges of $S$, $c(S)$ is the number of connected components of $S$, $\BB_G$ is the broken circuit complex of $G$, $L_G$ is the bond lattice of $G$ and $\mu_G$ its M\"{o}bius function, and $\pi:\mathcal{A}(G) \to L_G$ is the sink-component map defined in \cite[Section 4]{BN20}. These notions will be defined in Section~\ref{sec:Expression}.

In this paper, we provide four similar expressions for $A_G(V)$:
\begin{align*}
A_G(V) &= \sum_{S \subseteq E}(-1)^{s(S)}\prod_{C \in \mathcal{C}(S)}v_C && \mbox{[Theorem~\ref{thm:AC_Sub}]} \\ 
&= \sum_{S \in \BB_G}\prod_{C \in \mathcal{C}(S)}v_C && \mbox{[Theorem~\ref{thm:AC_BC}]} \\
&= \sum_{\pi \in L_G}(-1)^{d-|\pi|}\mu_G{(\hat{0},\pi)}\prod_{B \in \pi}{v_B} && \mbox{[Theorem~\ref{thm:AC_LG}]}\\ 
&= \sum_{\mathfrak{o} \in \mathcal{A}(G)}\prod_{ B \in \pi(\mathfrak{o})}v_{B} && \mbox{[Theorem~\ref{thm:AC_SD}]},
\end{align*}
where $\mathcal{C}(S)$ is the set of connected components of a subgraph $S$ and $s(S)=|S|-d+c(S)$ is the corank of $S$. To a connected component $C$ and a vertex subset $B$, we define
\[
v_C=1-\prod_{v \in V(C)}(1-v) \in\ZZ[V] \quad\mbox{and}\quad v_B=1-\prod_{v \in B}(1-v) \in\ZZ[V],
\]
which are generalizations of \eqref{eq:intro_v_e}. Comparing the above expressions for $\chi_G(n)$ and $A_G(V)$, we can see that replacing $n$ by $-v_C$ or $-v_B$ in expressions for the chromatic polynomial gives the acyclic orientation polynomial (up to sign).

Our main application of various expressions for $A_G(V)$ is to give a \emph{new} proof of Stanley's sink theorem \cite[Theorem 3.3]{Sta95} for the chromatic symmetric function $X_G$. This theorem states that
\begin{equation*}
\operatorname{sink}(G,j)=\sum_{\substack{\lambda\vdash d \\ \ell(\lambda)=j}}c_\lambda,
\end{equation*}
where $\operatorname{sink}(G,j)$ is the number of acyclic orientations of $G$ with $j$
sinks, the numbers $c_{\lambda}$ are defined by the expansion $X_G=\sum_{\lambda\vdash d}c_{\lambda}e_{\lambda}$ in terms of elementary symmetric functions $e_{\lambda}$, and $\ell(\lambda)$ is the length of a partition $\lambda$.
Stanley's original proof relies on the theory of quasi-symmetric functions and $P$-partitions, which led him \cite{Sta99, Sta12} to ask for a simple conceptual proof of the theorem.

This paper is organized as follows. Section~\ref{sec:AC_DC} introduces the acyclic orientation polynomial $A_G(V)$ and proves that it satisfies a deletion-contraction recurrence. Section \ref{sec:Expression} presents various expressions for $A_G(V)$. Section~\ref{sec:Stanley} provides a new proof of Stanley's sink theorem for chromatic symmetric functions. Section~\ref{sec:invariant} considers to what extent $A_G(V)$ and $a_G(t)$ distinguish non-isomorphic graphs. In Appendix~\ref{sec:appendix}, we discuss a relation between the acyclic orientation polynomial and the $\VV$-polynomial.

\section{Acyclic orientation polynomials and their recurrence}\label{sec:AC_DC}
For an edge $u_1u_2 \in E$, a direction $\overrightarrow{u_1u_2}$ (resp. $\overrightarrow{u_2u_1}$) means that the edge is oriented toward $u_2$ (resp. $u_1$). An \emph{orientation} $\mathfrak{o}$ is an assignment of a direction $\overrightarrow{u_1u_2}$ or $\overrightarrow{u_2u_1}$ to each edge $u_1u_2 \in E$. An orientation $\mathfrak{o}$ is said to be \emph{acyclic} if $\mathfrak{o}$ has no directed cycles. Let $\mathcal{A}(G)$ be the set of acyclic orientations of $G$. For $\mathfrak{o} \in \mathcal{A}(G)$, a \emph{sink} of $\mathfrak{o}$ is a vertex $v$ such that the direction of each edge incident with $v$ is toward $v$. Let $\operatorname{Sink}(\mathfrak{o})$ be the set of sinks of an orientation $\mathfrak{o}$ and $\operatorname{sink}(\mathfrak{o})=|\operatorname{Sink}(\mathfrak{o})|$. 

We associate a variable to each vertex $v \in V$, and use the same notation $v$ for this variable. Then $V$ also denotes the set of the variables corresponding to vertices. Assume that all the variables commute with each other.

We now introduce the main object in this paper.
\begin{defn}\label{def:AOP} \label{def:A_G}
 For a graph $G=(V,E)$, the \emph{acyclic orientation polynomial} $A_G(V)$ of $G$ is the generating function for sinks of acyclic orientations of $G$, i.e.,
\[
A_G(V)=\sum_{\mathfrak{o}\in \mathcal{A}(G)}\prod_{v \in \operatorname{Sink}(\mathfrak{o})}{v}.
\]
Also $a_G(t)$ is the polynomial obtained from $A_G(V)$ by setting $v=t$ for each $v \in V$, i.e.,
\[
a_G(t)=\sum_{\mathfrak{o}\in \mathcal{A}(G)}t^{\operatorname{sink}(\mathfrak{o})}.
\]
\end{defn}
Note that if $G$ contains a loop, then there is no acyclic orientation of $G$ so that $A_G(V) = 0$.

For a subset $U$ of $V$, let $\mathcal{A}(G,U)$ be the set of acyclic orientations $\mathfrak{o}$ of $G$ with $\operatorname{Sink}(\mathfrak{o})=U$. Then the coefficient of $\prod_{v \in U}v$ in $A_G(V)$ is equal to $|\mathcal{A}(G,U)|$, which will be denoted by $a(G,U)$. When $U$ consists of a single vertex $u$, we write $a(G,u)$ instead of $a(G,\{u\})$.

\begin{ex}\label{ex:two} Let us consider two graphs $G_1=(\{v_1,v_2,v_3\},\{v_1v_2,v_2v_3,v_1v_3\})$ and $G_2=(\{v_1,v_2,v_3,v_4\},\{v_1v_2,v_2v_3,v_1v_3,v_1v_4\})$. Their acyclic orientation polynomials are
\begin{align*}
A_{G_1}(\{v_1,v_2,v_3\}) &= 2(v_1+v_2+v_3), \mbox{ and} \\
A_{G_2}(\{v_1,v_2,v_3,v_4\}) &= 2(v_1+v_2+v_3+v_4+v_2v_4+v_3v_4).
\end{align*}
In Figure \ref{fig:AO_G_2}, we list all the acyclic orientations of $G_2$ with corresponding monomials.
\end{ex}

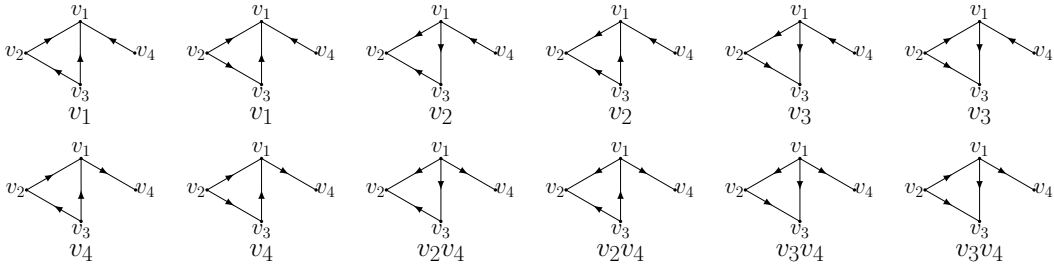
\begin{figure}[h]
\centering
\resizebox{.9\linewidth}{!}{
\begin{tabular}{cccccc}
%-------01_____v4-----------------
\begin{tikzpicture}[scale=0.50,every node/.style={transform shape}]
\fill (0,1) circle (1.5pt);
\fill (0,-1) circle (1.5pt);
\fill ({sqrt(3)},0) circle (1.5pt);
\fill (-{sqrt(3)},0) circle (1.5pt);

\node at (0,1.3) {\LARGE{$v_1$}};
\node at (-{sqrt(3)}-0.355,0) {\LARGE{$v_2$}};
\node at (0,-1.3) {\LARGE{$v_3$}};
\node at ({sqrt(3)+0.35},0) {\LARGE{$v_4$}};
%------------------------------
\begin{scope}[decoration={
    markings,
    mark=at position 0.50 with {\arrow{latex}}}
    ] 
\draw[postaction={decorate}] (-{sqrt(3)},0) -- (0,1);
\draw[postaction={decorate}] (0,-1) -- (-{sqrt(3)},0);
\draw[postaction={decorate}] (0,-1) -- (0,1);
\draw[postaction={decorate}] ({sqrt(3)},0) -- (0,1);
\end{scope}

\node at (0,-2) {\Huge{$v_1$}};
\end{tikzpicture}&
%-------02_____v3-----------------
\begin{tikzpicture}[scale=0.50,every node/.style={transform shape}]
\fill (0,1) circle (1.5pt);
\fill (0,-1) circle (1.5pt);
\fill ({sqrt(3)},0) circle (1.5pt);
\fill (-{sqrt(3)},0) circle (1.5pt);

\node at (0,1.3) {\LARGE{$v_1$}};
\node at (-{sqrt(3)}-0.35,0) {\LARGE{$v_2$}};
\node at (0,-1.3) {\LARGE{$v_3$}};
\node at ({sqrt(3)+0.3},0) {\LARGE{$v_4$}};
%------------------------------
\begin{scope}[decoration={
    markings,
    mark=at position 0.50 with {\arrow{latex}}}
    ] 
\draw[postaction={decorate}] (-{sqrt(3)},0) -- (0,1);
\draw[postaction={decorate}] (-{sqrt(3)},0) -- (0,-1);
\draw[postaction={decorate}] (0,-1) -- (0,1);
\draw[postaction={decorate}] ({sqrt(3)},0) -- (0,1);
\end{scope}

\node at (0,-2) {\Huge{$v_1$}};
\end{tikzpicture}&
%-------03_____v2v4-----------------
\begin{tikzpicture}[scale=0.50,every node/.style={transform shape}]
\fill (0,1) circle (1.5pt);
\fill (0,-1) circle (1.5pt);
\fill ({sqrt(3)},0) circle (1.5pt);
\fill (-{sqrt(3)},0) circle (1.5pt);

\node at (0,1.3) {\LARGE{$v_1$}};
\node at (-{sqrt(3)}-0.35,0) {\LARGE{$v_2$}};
\node at (0,-1.3) {\LARGE{$v_3$}};
\node at ({sqrt(3)+0.3},0) {\LARGE{$v_4$}};
%------------------------------
\begin{scope}[decoration={
    markings,
    mark=at position 0.50 with {\arrow{latex}}}
    ] 
\draw[postaction={decorate}] (0,1) -- (-{sqrt(3)},0);
\draw[postaction={decorate}] (0,-1) -- (-{sqrt(3)},0);
\draw[postaction={decorate}] (0,1) -- (0,-1);
\draw[postaction={decorate}] ({sqrt(3)},0) -- (0,1);
\end{scope}

\node at (0,-2) {\Huge{$v_2$}};
\end{tikzpicture}&
%-------04_____v2-----------------
\begin{tikzpicture}[scale=0.50,every node/.style={transform shape}]
\fill (0,1) circle (1.5pt);
\fill (0,-1) circle (1.5pt);
\fill ({sqrt(3)},0) circle (1.5pt);
\fill (-{sqrt(3)},0) circle (1.5pt);

\node at (0,1.3) {\LARGE{$v_1$}};
\node at (-{sqrt(3)}-0.35,0) {\LARGE{$v_2$}};
\node at (0,-1.3) {\LARGE{$v_3$}};
\node at ({sqrt(3)+0.3},0) {\LARGE{$v_4$}};
%------------------------------
\begin{scope}[decoration={
    markings,
    mark=at position 0.50 with {\arrow{latex}}}
    ] 
\draw[postaction={decorate}] (0,1) -- (-{sqrt(3)},0);
\draw[postaction={decorate}] (0,-1) -- (-{sqrt(3)},0);
\draw[postaction={decorate}] (0,-1) -- (0,1);
\draw[postaction={decorate}] ({sqrt(3)},0) -- (0,1);
\end{scope}

\node at (0,-2) {\Huge{$v_2$}};
\end{tikzpicture}&
%-------05_____v2v4-----------------
\begin{tikzpicture}[scale=0.50,every node/.style={transform shape}]
\fill (0,1) circle (1.5pt);
\fill (0,-1) circle (1.5pt);
\fill ({sqrt(3)},0) circle (1.5pt);
\fill (-{sqrt(3)},0) circle (1.5pt);

\node at (0,1.3) {\LARGE{$v_1$}};
\node at (-{sqrt(3)}-0.35,0) {\LARGE{$v_2$}};
\node at (0,-1.3) {\LARGE{$v_3$}};
\node at ({sqrt(3)+0.3},0) {\LARGE{$v_4$}};
%------------------------------
\begin{scope}[decoration={
    markings,
    mark=at position 0.50 with {\arrow{latex}}}
    ] 
\draw[postaction={decorate}] (0,1) -- (-{sqrt(3)},0);
\draw[postaction={decorate}] (-{sqrt(3)},0) -- (0,-1);
\draw[postaction={decorate}] (0,1) -- (0,-1);
\draw[postaction={decorate}] ({sqrt(3)},0) -- (0,1);
\end{scope}

\node at (0,-2) {\Huge{$v_3$}};
\end{tikzpicture}&
%-------06_____v2-----------------
\begin{tikzpicture}[scale=0.50,every node/.style={transform shape}]
\fill (0,1) circle (1.5pt);
\fill (0,-1) circle (1.5pt);
\fill ({sqrt(3)},0) circle (1.5pt);
\fill (-{sqrt(3)},0) circle (1.5pt);

\node at (0,1.3) {\LARGE{$v_1$}};
\node at (-{sqrt(3)}-0.35,0) {\LARGE{$v_2$}};
\node at (0,-1.3) {\LARGE{$v_3$}};
\node at ({sqrt(3)+0.3},0) {\LARGE{$v_4$}};
%------------------------------
\begin{scope}[decoration={
    markings,
    mark=at position 0.50 with {\arrow{latex}}}
    ] 
\draw[postaction={decorate}] (-{sqrt(3)},0) -- (0,1);
\draw[postaction={decorate}] (-{sqrt(3)},0) -- (0,-1);
\draw[postaction={decorate}] (0,1) -- (0,-1);
\draw[postaction={decorate}] ({sqrt(3)},0) -- (0,1);
\end{scope}

\node at (0,-2) {\Huge{$v_3$}};
\end{tikzpicture}\\
%-------07_____v4-----------------
\begin{tikzpicture}[scale=0.50,every node/.style={transform shape}]
\fill (0,1) circle (1.5pt);
\fill (0,-1) circle (1.5pt);
\fill ({sqrt(3)},0) circle (1.5pt);
\fill (-{sqrt(3)},0) circle (1.5pt);

\node at (0,1.3) {\LARGE{$v_1$}};
\node at (-{sqrt(3)}-0.35,0) {\LARGE{$v_2$}};
\node at (0,-1.3) {\LARGE{$v_3$}};
\node at ({sqrt(3)+0.3},0) {\LARGE{$v_4$}};
%------------------------------
\begin{scope}[decoration={
    markings,
    mark=at position 0.50 with {\arrow{latex}}}
    ] 
\draw[postaction={decorate}] (-{sqrt(3)},0) -- (0,1);
\draw[postaction={decorate}] (0,-1) -- (-{sqrt(3)},0);
\draw[postaction={decorate}] (0,-1) -- (0,1);
\draw[postaction={decorate}] (0,1) -- ({sqrt(3)},0);
\end{scope}

\node at (0,-2) {\Huge{$v_4$}};
\end{tikzpicture}&
%-------08_____v3-----------------
\begin{tikzpicture}[scale=0.50,every node/.style={transform shape}]
\fill (0,1) circle (1.5pt);
\fill (0,-1) circle (1.5pt);
\fill ({sqrt(3)},0) circle (1.5pt);
\fill (-{sqrt(3)},0) circle (1.5pt);

\node at (0,1.3) {\LARGE{$v_1$}};
\node at (-{sqrt(3)}-0.35,0) {\LARGE{$v_2$}};
\node at (0,-1.3) {\LARGE{$v_3$}};
\node at ({sqrt(3)+0.3},0) {\LARGE{$v_4$}};
%------------------------------
\begin{scope}[decoration={
    markings,
    mark=at position 0.50 with {\arrow{latex}}}
    ] 
\draw[postaction={decorate}] (-{sqrt(3)},0) -- (0,1);
\draw[postaction={decorate}] (-{sqrt(3)},0) -- (0,-1);
\draw[postaction={decorate}] (0,-1) -- (0,1);
\draw[postaction={decorate}] (0,1) -- ({sqrt(3)},0);
\end{scope}

\node at (0,-2) {\Huge{$v_4$}};
\end{tikzpicture}&
%-------09_____v1v4-----------------
\begin{tikzpicture}[scale=0.50,every node/.style={transform shape}]
\fill (0,1) circle (1.5pt);
\fill (0,-1) circle (1.5pt);
\fill ({sqrt(3)},0) circle (1.5pt);
\fill (-{sqrt(3)},0) circle (1.5pt);

\node at (0,1.3) {\LARGE{$v_1$}};
\node at (-{sqrt(3)}-0.35,0) {\LARGE{$v_2$}};
\node at (0,-1.3) {\LARGE{$v_3$}};
\node at ({sqrt(3)+0.3},0) {\LARGE{$v_4$}};
%------------------------------
\begin{scope}[decoration={
    markings,
    mark=at position 0.50 with {\arrow{latex}}}
    ] 
\draw[postaction={decorate}] (0,1) -- (-{sqrt(3)},0);
\draw[postaction={decorate}] (0,-1) -- (-{sqrt(3)},0);
\draw[postaction={decorate}] (0,1) -- (0,-1);
\draw[postaction={decorate}] (0,1) -- ({sqrt(3)},0);
\end{scope}

\node at (0,-2) {\Huge{$v_2v_4$}};
\end{tikzpicture}&
%-------10_____v1-----------------
\begin{tikzpicture}[scale=0.50,every node/.style={transform shape}]
\fill (0,1) circle (1.5pt);
\fill (0,-1) circle (1.5pt);
\fill ({sqrt(3)},0) circle (1.5pt);
\fill (-{sqrt(3)},0) circle (1.5pt);

\node at (0,1.3) {\LARGE{$v_1$}};
\node at (-{sqrt(3)}-0.35,0) {\LARGE{$v_2$}};
\node at (0,-1.3) {\LARGE{$v_3$}};
\node at ({sqrt(3)+0.3},0) {\LARGE{$v_4$}};
%------------------------------
\begin{scope}[decoration={
    markings,
    mark=at position 0.50 with {\arrow{latex}}}
    ] 
\draw[postaction={decorate}] (0,1) -- (-{sqrt(3)},0);
\draw[postaction={decorate}] (0,-1) -- (-{sqrt(3)},0);
\draw[postaction={decorate}] (0,-1) -- (0,1);
\draw[postaction={decorate}] (0,1) -- ({sqrt(3)},0);
\end{scope}

\node at (0,-2) {\Huge{$v_2v_4$}};
\end{tikzpicture}&
%-------11_____v1v4-----------------
\begin{tikzpicture}[scale=0.50,every node/.style={transform shape}]
\fill (0,1) circle (1.5pt);
\fill (0,-1) circle (1.5pt);
\fill ({sqrt(3)},0) circle (1.5pt);
\fill (-{sqrt(3)},0) circle (1.5pt);

\node at (0,1.3) {\LARGE{$v_1$}};
\node at (-{sqrt(3)}-0.35,0) {\LARGE{$v_2$}};
\node at (0,-1.3) {\LARGE{$v_3$}};
\node at ({sqrt(3)+0.3},0) {\LARGE{$v_4$}};
%------------------------------
\begin{scope}[decoration={
    markings,
    mark=at position 0.50 with {\arrow{latex}}}
    ] 
\draw[postaction={decorate}] (0,1) -- (-{sqrt(3)},0);
\draw[postaction={decorate}] (-{sqrt(3)},0) -- (0,-1);
\draw[postaction={decorate}] (0,1) -- (0,-1);
\draw[postaction={decorate}] (0,1) -- ({sqrt(3)},0);
\end{scope}

\node at (0,-2) {\Huge{$v_3v_4$}};

\end{tikzpicture}&
%-------12_____v1-----------------
\begin{tikzpicture}[scale=0.50,every node/.style={transform shape}]
\fill (0,1) circle (1.5pt);
\fill (0,-1) circle (1.5pt);
\fill ({sqrt(3)},0) circle (1.5pt);
\fill (-{sqrt(3)},0) circle (1.5pt);

\node at (0,1.3) {\LARGE{$v_1$}};
\node at (-{sqrt(3)}-0.35,0) {\LARGE{$v_2$}};
\node at (0,-1.3) {\LARGE{$v_3$}};
\node at ({sqrt(3)+0.3},0) {\LARGE{$v_4$}};
%------------------------------
\begin{scope}[decoration={
    markings,
    mark=at position 0.50 with {\arrow{latex}}}
    ] 
\draw[postaction={decorate}] (-{sqrt(3)},0) -- (0,1);
\draw[postaction={decorate}] (-{sqrt(3)},0) -- (0,-1);
\draw[postaction={decorate}] (0,1) -- (0,-1);
\draw[postaction={decorate}] (0,1) -- ({sqrt(3)},0);
\end{scope}

\node at (0,-2) {\Huge{$v_3v_4$}};

\end{tikzpicture}
\end{tabular}
}
    \caption{Acyclic orientations of $G_2$ with corresponding monomials below.} \label{fig:AO_G_2}
\end{figure}

We will show that the acyclic orientation polynomial $A_G(V)$ satisfies a deletion-contraction recurrence. Our theorem generalizes the fact that the number of acyclic orientations satisfies a deletion-contraction recurrence.

For an edge $e\in E$, let $G\setminus e$ be the graph obtained from $G$ by deleting $e$.
When $e$ is not a loop, let $G/e$ be the graph obtained from $G$ by contracting $e$. Any edge in $G$ parallel to $e$ becomes a loop in $G/e$. We denote by $v_e$ the vertex created by contracting $e=\{u_1,u_2\}$, and by $V/e$ the vertex set of $G/e$.
While deleting $e$ leaves the vertex set unchanged, contracting $e$ yields $V/e = V\setminus\{u_1, u_2\}\cup\{v_e\}$ where $u_1$ and $u_2$ are the end vertices of $e$. To make $A_{G/e}(V/e)$ belong to $\ZZ[V]$, we identify
\begin{equation} \label{eq:edge_relation}
v_e=1-(1-u_1)(1-u_2)=u_1+u_2-u_1u_2 \in \ZZ[V],
\end{equation}
and hence we regard $A_{G/e}(V/e)$ as a polynomial in $V$.

\begin{thm}\label{thm:DC}
The acyclic orientation polynomial $A_G(V)$ satisfies the following deletion-contraction recurrence: for every non-loop edge $e\in E$,
\begin{equation}\label{eq:DC}
A_{G}(V)=A_{G\setminus e}(V)+A_{G/e}(V/e).
\end{equation}
\end{thm}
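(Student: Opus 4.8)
The plan is to prove the identity \eqref{eq:DC} directly, by partitioning $\mathcal{A}(G)$ according to the underlying orientation on $G\setminus e$, rather than by matching coefficients one monomial at a time. Write $e=u_1u_2$ and let $\rho\colon\mathcal{A}(G)\to\mathcal{A}(G\setminus e)$ be the map that forgets the direction assigned to $e$; this is well defined because deleting an edge cannot create a directed cycle. For $\mathfrak{o}'\in\mathcal{A}(G\setminus e)$, extending $\mathfrak{o}'$ by $\overrightarrow{u_1u_2}$ (resp.\ $\overrightarrow{u_2u_1}$) fails to be acyclic exactly when $\mathfrak{o}'$ has a directed path from $u_2$ to $u_1$ (resp.\ from $u_1$ to $u_2$), and these two directed paths cannot both exist without forming a directed cycle in $\mathfrak{o}'$; hence each fiber $\rho^{-1}(\mathfrak{o}')$ has size $1$ or $2$. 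Call $\mathfrak{o}'$ \emph{contractible} when $|\rho^{-1}(\mathfrak{o}')|=2$, i.e.\ when $\mathfrak{o}'$ has no directed path joining $u_1$ and $u_2$ in either direction. The first step is to check that contracting $e$ sets up a bijection between the contractible orientations of $G\setminus e$ and $\mathcal{A}(G/e)$: a directed cycle of the contracted orientation is either a directed cycle already present in $\mathfrak{o}'$ or, once cut open at $v_e$, one of the two forbidden directed paths, and conversely every element of $\mathcal{A}(G/e)$ lifts uniquely to a contractible orientation of $G\setminus e$. The degenerate cases I would dispose of at the start: if $e$ has a parallel edge then no orientation of $G\setminus e$ is contractible and $G/e$ has a loop, so $A_{G/e}(V/e)=0$; a loop anywhere in $G$ makes all three polynomials vanish.

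The second step is to track the sinks. For any $\mathfrak{o}'\in\mathcal{A}(G\setminus e)$, a lift $\mathfrak{o}$, and the contracted orientation $\bar{\mathfrak{o}}$ (when defined), every vertex $v\notin\{u_1,u_2\}$ has the same set of incident edges in $G$, in $G\setminus e$, and in $G/e$, so $v$ is a sink of one of these orientations iff it is a sink of the others; thus only the status of $u_1,u_2$ (or of $v_e$) can change. If $\mathfrak{o}'$ is not contractible, its unique lift $\mathfrak{o}$ satisfies $\operatorname{Sink}(\mathfrak{o})=\operatorname{Sink}(\mathfrak{o}')$: if the forbidden direction is $\overrightarrow{u_2u_1}$, then $e$ is forced toward $u_2$, the vertex $u_1$ already has an outgoing edge (the first edge of the directed path to $u_2$) so it stays a non-sink, and $u_2$ only gains an incoming edge so it is a sink of $\mathfrak{o}$ iff it was a sink of $\mathfrak{o}'$. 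Hence such an $\mathfrak{o}'$ contributes the same monomial to $A_G(V)$ and to $A_{G\setminus e}(V)$. If $\mathfrak{o}'$ is contractible, set $b_i=1$ if $u_i\in\operatorname{Sink}(\mathfrak{o}')$ and $b_i=0$ otherwise, and let $p=\prod_{v\in\operatorname{Sink}(\mathfrak{o}')\setminus\{u_1,u_2\}}v$. The lift $\mathfrak{o}_1$ with $e$ toward $u_2$ has $u_1$ a non-sink and $u_2$ a sink iff $b_2=1$; symmetrically for the lift $\mathfrak{o}_2$ with $e$ toward $u_1$; and $\bar{\mathfrak{o}}$ has $v_e$ a sink iff $b_1=b_2=1$, since the edges at $v_e$ in $G/e$ are exactly those at $u_1$ or $u_2$ in $G\setminus e$. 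So the respective contributions are $p\,u_2^{b_2}$ and $p\,u_1^{b_1}$ to $A_G(V)$, the single term $p\,u_1^{b_1}u_2^{b_2}$ to $A_{G\setminus e}(V)$, and $p\,v_e^{b_1b_2}$ to $A_{G/e}(V/e)$.

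The last step is to sum over $\mathfrak{o}'\in\mathcal{A}(G\setminus e)$ and check the remaining local identity. Non-contractible $\mathfrak{o}'$ contribute identically to $A_G(V)$ and $A_{G\setminus e}(V)$, so nothing is left for them. For contractible $\mathfrak{o}'$ one needs
\[
p\,u_2^{b_2}+p\,u_1^{b_1}=p\,u_1^{b_1}u_2^{b_2}+p\,v_e^{b_1b_2},
\]
where $v_e$ is read through the identification \eqref{eq:edge_relation}. When $(b_1,b_2)\neq(1,1)$ one has $v_e^{b_1b_2}=1$ and the identity is trivial; when $(b_1,b_2)=(1,1)$ it becomes $p(u_1+u_2)=p\,u_1u_2+p\,(u_1+u_2-u_1u_2)$, which is exactly the algebraic content of \eqref{eq:edge_relation} and explains why that assignment is the right one. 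Summing yields \eqref{eq:DC}. I expect the main obstacle to be the careful bookkeeping of the sink sets of the three orientations $\mathfrak{o}_1,\mathfrak{o}_2,\bar{\mathfrak{o}}$ attached to a contractible $\mathfrak{o}'$, together with the verification that contraction and its inverse respect acyclicity; once that is in place, the rest is the one-line computation above.
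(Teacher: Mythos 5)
Your argument is correct, and it is organized differently from the paper's. The paper proves \eqref{eq:DC} coefficient by coefficient: it fixes a candidate sink set $U$, translates the identification $v_e=u_1+u_2-u_1u_2$ into a signed formula for the coefficient $a_C$ coming from $A_{G/e}(V/e)$, and then runs three cases on $U\cap\{u_1,u_2\}$, the hardest of which (both endpoints outside $U$) requires the decomposition $\mathcal{A}(G,U)=\mathcal{A}_1\uplus\mathcal{A}_2\uplus\mathcal{A}_{3,+}\uplus\mathcal{A}_{3,-}$ and explicit inverse bijections with $\mathcal{A}(G\setminus e,U)$ and $\mathcal{A}(G/e,U)$. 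You instead decompose by the fibers of the forgetful map $\rho\colon\mathcal{A}(G)\to\mathcal{A}(G\setminus e)$, use the same underlying combinatorics (the dichotomy governed by directed paths between $u_1$ and $u_2$, and the merge/split bijection between ``contractible'' orientations of $G\setminus e$ and $\mathcal{A}(G/e)$), and then collapse all of the sign and sink bookkeeping into the single local identity $u_1^{b_1}+u_2^{b_2}=u_1^{b_1}u_2^{b_2}+v_e^{b_1b_2}$, whose only nontrivial case $(b_1,b_2)=(1,1)$ is exactly $u_1+u_2=u_1u_2+v_e$. What this buys is a cleaner global argument that avoids the case analysis on $U\cap\{u_1,u_2\}$ and simultaneously \emph{explains} why \eqref{eq:edge_relation} is the right assignment, whereas the paper's route makes the individual counting identities $a=a_D+a_C$ (and the bijections behind them) explicit, which it reuses in spirit later. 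The only place you are lighter than the paper is the verification that splitting $v_e$ in an acyclic orientation of $G/e$ yields an acyclic, contractible orientation of $G\setminus e$ (the paper spells out that acyclicity of $\mathfrak{o}$ forbids directed paths between $u_1$ and $u_2$ after splitting); this is routine and does not affect correctness.
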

\begin{proof}
Let $e=u_1u_2$ be a non-loop edge of $G$. Fix a non-empty subset $U$ of $V$ and let $a$, $a_D$ and $a_C$ be the coefficients of $\prod_{v \in U}v$ in $A_G(V)$, $A_{G\setminus e}(V)$ and $A_{G/e}(V/e)$, respectively. It is clear that $a=a(G,U)$ and $a_D=a(G\setminus e,U)$. From the relation $v_e=u_1+u_2-u_1u_2$, we see that
$$
a_C =
\begin{cases}
~ -|\mathcal{A}(G/e,U\setminus \{u_1,u_2\}  \cup \{v_e\})|, & \mbox{if } u_1,u_2\in U, \\
~ |\mathcal{A}(G/e,U\setminus \{u_i\}\cup\{v_e\})|, & \mbox{if } U \cap \{u_1,u_2\} =\{u_i\}, \\
~ |\mathcal{A}(G/e,U)|, & \mbox{if } u_1,u_2\notin U.
\end{cases}
$$
To prove equation \eqref{eq:DC}, it suffices to show that
\[
a=a_D+a_C.
\]

Suppose that $U$ contains two end vertices of an edge in $G\setminus e$. Then we have $a=a_D=a_C=0$ since sinks of an acyclic orientation are not adjacent. From now on, we may assume that $U$ has no vertices that are adjacent in $G\setminus e$. We need to consider the following three cases.
\begin{description}
\item[Case 1] $u_1,u_2 \in U$. In this case, $u_1$ and $u_2$ are adjacent in $G$, and hence we have $a=0$. Also merging $u_1$ and $u_2$ gives a bijection between $\mathcal{A}(G\setminus e,U)$ and $\mathcal{A}(G/e,U \setminus \{u_1,u_2\} \cup \{v_e\} )$, which shows that $a_D+a_C=0$.
\item[Case 2] $U \cap \{u_1,u_2\} =\{u_i\}$. Without loss of generality we assume that $u_1\in U$ and $u_2\notin U$. Deleting $e$ induces a bijection between $\mathcal{A}(G,U)$ and $\mathcal{A}(G\setminus e,U)\uplus \mathcal{A}(G\setminus e,U\cup\{u_2\})$:
\[
\mathfrak{o}\in\AO(G,U) \mapsto
\begin{cases}
\mathfrak{o} \setminus e \in\mathcal{A}(G\setminus e,U) , & \mbox{if $u_2$ is not a sink of $\mathfrak{o}\setminus e$,} \\
\mathfrak{o} \setminus e \in\mathcal{A}(G\setminus e,U\cup\{u_2\}) , & \mbox{if $u_2$ is a sink of $\mathfrak{o}\setminus e$.}
\end{cases}
\]
Moreover, since both $u_1$ and $u_2$ do not have outgoing edges in acyclic orientations in $\mathcal{A}(G\setminus e,U\cup\{u_2\})$, we can merge $u_1$ and $u_2$ in $G\setminus e$ to get a one-to-one correspondence between $\mathcal{A}(G\setminus e,U\cup\{u_2\})$ and $\mathcal{A}(G/e,U\setminus \{u_1\}\cup\{v_e\})$. Therefore, we have $a = a_D + |\mathcal{A}(G\setminus e,U\cup\{u_2\})| = a_D + a_C$.
\item[Case 3] $u_1,u_2 \notin U$. Let $\mathcal{A}_{\mbox{$\nleftrightarrow$}}$ be the set of acyclic orientations of $G\setminus e$ which neither contain a directed path from $u_1$ to $u_2$ nor vice versa. Define 
\begin{align*}
\mathcal{A}_1 &= \{\mathfrak{o} \in \mathcal{A}(G,U)\, \mid\,\mathfrak{o} \setminus e \notin \mathcal{A}(G\setminus e,U) \}, \\
\mathcal{A}_2 &= \{\mathfrak{o} \in \mathcal{A}(G,U)\, \mid\,\mathfrak{o} \setminus e \in \mathcal{A}(G\setminus e,U) \mbox{ and } \mathfrak{o} \setminus e \notin \mathcal{A}_{\mbox{$\nleftrightarrow$}} \},  \\
\mathcal{A}_3 &= \{\mathfrak{o} \in \mathcal{A}(G,U)\, \mid\,\mathfrak{o} \setminus e \in \mathcal{A}(G\setminus e,U) \mbox{ and } \mathfrak{o} \setminus e \in \mathcal{A}_{\mbox{$\nleftrightarrow$}} \},  \\
\mathcal{A}_{3,+} &= \{\mathfrak{o} \in \mathcal{A}_3\, \mid\, \overrightarrow{u_1u_2} \in \mathfrak{o} \}, \mbox{ and }\mathcal{A}_{3,-}= \{\mathfrak{o} \in \mathcal{A}_3\, \mid\, \overrightarrow{u_2u_1} \in \mathfrak{o} \}.
\end{align*}
Then 
\[
\mathcal{A}(G,U)=\mathcal{A}_1 \uplus \mathcal{A}_2 \uplus \mathcal{A}_3, \mbox{ and } \mathcal{A}_3=\mathcal{A}_{3,+} \uplus \mathcal{A}_{3,-}.
\]
We claim that $a_D = |\mathcal{A}_2\uplus\mathcal{A}_{3,-}|$ and $a_C = |\mathcal{A}_1\uplus\mathcal{A}_{3,+}|$, which will follow from the fact that the following two maps are bijections:
\begin{align*}
\begin{array}{l}
\mathcal{A}_2 \uplus \mathcal{A}_{3,-} \longrightarrow \mathcal{A}(G\setminus e, U) \\
\quad\quad\quad\,\,\,\,\, \mathfrak{o} \longmapsto \mathfrak{o}\setminus e
  \end{array}
\quad\mbox{and}\quad
\begin{array}{l}
\mathcal{A}_1 \uplus \mathcal{A}_{3,+} \longrightarrow \mathcal{A}(G/e, U) \\
\quad\quad\quad\,\,\,\,\, \mathfrak{o} \longmapsto \mathfrak{o}/e.
  \end{array}
\end{align*}
To verify this fact, we exhibit their inverses below.

First, we define a map from $\mathcal{A}(G\setminus e, U)$ to $\mathcal{A}_2\uplus\mathcal{A}_{3,-}$ as follows: for $\mathfrak{o}\in\mathcal{A}(G\setminus e, U)$,
\[
\mathfrak{o}\mapsto
\begin{cases}
\mathfrak{o}\cup \overrightarrow{u_1u_2} \in\mathcal{A}_2, & \mbox{if $\mathfrak{o}$ contains a directed path from $u_1$ to $u_2$,} \\
\mathfrak{o}\cup \overrightarrow{u_2u_1} \in\mathcal{A}_2, & \mbox{if $\mathfrak{o}$ contains a directed path from $u_2$ to $u_1$,} \\
\mathfrak{o}\cup \overrightarrow{u_2u_1} \in\mathcal{A}_{3,-}, & \mbox{otherwise.}
\end{cases}
\]
Note that by acyclicity of $\mathfrak{o}$, the first and second cases are mutually exclusive. One can check that this map is the inverse of the map $\oo\mapsto \oo\setminus e$ from $\mathcal{A}_2\uplus\mathcal{A}_{3,-}$ to $\mathcal{A}(G\setminus e, U)$.

Similarly, let us construct the inverse of the map $\mathfrak{o}\mapsto\mathfrak{o}/e$ from $\mathcal{A}_1 \uplus \mathcal{A}_{3,+}$ to $\mathcal{A}(G/e, U)$. For $\mathfrak{o}\in\mathcal{A}(G/e, U)$, let $\mathfrak{o}'$ be the acyclic orientation of $G\setminus e$ naturally obtained from $\mathfrak{o}$ by splitting $v_e$ into $u_1$ and $u_2$. Since $v_e$ is not a sink in $\mathfrak{o}$, it follows that $\mathfrak{o}'$ cannot have both $u_1$ and $u_2$ as its sinks at the same time.
Furthermore, there are no paths from $u_1$ to $u_2$ nor vice versa in $\mathfrak{o}'$ because of acyclicity of $\mathfrak{o}$. Define a map from $\mathcal{A}(G/e, U)$ to $\mathcal{A}_1\uplus\mathcal{A}_{3,+}$ as follows:
$$
\mathfrak{o}\mapsto
\begin{cases}
\mathfrak{o}'\cup \overrightarrow{u_1u_2} \in\mathcal{A}_1, & \mbox{if $u_1$ is a sink of $\mathfrak{o}'$,} \\
\mathfrak{o}'\cup \overrightarrow{u_2u_1} \in\mathcal{A}_1, & \mbox{if $u_2$ is a sink of $\mathfrak{o}'$,} \\
\mathfrak{o}'\cup \overrightarrow{u_1u_2} \in\mathcal{A}_{3,+}, & \mbox{otherwise.}
\end{cases}
$$
This map is the inverse of contracting $e$ from $\mathfrak{o}$ in $\mathcal{A}_1 \uplus \mathcal{A}_{3,+}$.
\end{description}
\end{proof}

\begin{ex} \label{ex:SGF}
Let $H$ be the graph whose vertex set is $V(H)=\{v_1,v_2,v_3,v_4\}$ and edge set is $E(H)=\{v_1v_2,v_2v_3,v_3v_1,v_1v_4,v_3v_4\}$. The graphs $H,H\setminus v_3{v_4}$, and $H/v_3{v_4}$ are shown in Figure \ref{fig:G1G2H}. Using the deletion-contraction recurrence together with the computations in Example~\ref{ex:two}, we obtain
\begin{align*}
A_H(V) &= A_{H\setminus v_3{v_4}}(V)+A_{H/v_3{v_4}}(\{v_1,v_2,v_{34}=v_3+v_4-v_3v_4\}) \\
&= 2(v_1+v_2+v_3+v_4+v_2v_4+v_3v_4)+2(v_1+v_2+v_3+v_4-v_3v_4)
\\
&= 4(v_1+v_2+v_3+v_4)+2v_2v_4, 
\end{align*}
and hence we have $a_H(t) = 16t+2t^2$.
\end{ex}

\begin{figure}[h]
\centering
\resizebox{.9\linewidth}{!}{
\setlength{\tabcolsep}{25pt}
    \begin{tabular}{ccc}

\begin{tikzpicture}[scale=0.8,every node/.style={transform shape}] \fill (0,1) circle (2pt); \fill (0,-1) circle (2pt); \fill ({sqrt(3)},0) circle (2pt); \fill (-{sqrt(3)},0) circle (2pt); \node at (0,1.3) {\Large{$v_1$}}; \node at (-{sqrt(3)}-0.35,0) {\Large{$v_2$}}; \node at (0,-1.3) {\Large{$v_3$}}; \node at ({sqrt(3)+0.3},0) {\Large{$v_4$}}; 
%------------------------------ 
\draw (0,1) -- (-{sqrt(3)},0); \draw (-{sqrt(3)},0) -- (0,-1); \draw (0,1) -- (0,-1); \draw ({sqrt(3)},0) --(0,1); \draw ({sqrt(3)},0) --(0,-1); 
\node at (0,-2) {\large $H$};
\end{tikzpicture} 
&
\begin{tikzpicture}[scale=0.8,every node/.style={transform shape}]
 \fill (0,1) circle (2pt); \fill (0,-1) circle (2pt); \fill ({sqrt(3)},0) circle (2pt); \fill (-{sqrt(3)},0) circle (2pt); \node at (0,1.3) {\Large{$v_1$}}; \node at (-{sqrt(3)}-0.35,0) {\Large{$v_2$}}; \node at (0,-1.3) {\Large{$v_3$}}; \node at ({sqrt(3)+0.3},0) {\Large{$v_4$}}; %------------------------------
 
\draw (0,1) -- (-{sqrt(3)},0); \draw (-{sqrt(3)},0) -- (0,-1); \draw (0,1) -- (0,-1); \draw ({sqrt(3)},0) --(0,1); 
\node at (0,-2) {\large $H\setminus v_3v_4$};
\end{tikzpicture}
&
\begin{tikzpicture}[scale=0.8,every node/.style={transform shape}] \fill (0,1) circle (2pt); \fill (0,-1) circle (2pt); \fill (-{sqrt(3)},0) circle (2pt); \node at (0,1.3) {\Large{$v_1$}}; \node at (-{sqrt(3)}-0.355,0) {\Large{$v_2$}}; \node at (0,-1.3) {\Large{$v_{34}$}}; \node at ({sqrt(3)+0.3},0) {\quad\quad}; 
%------------------------------ 
\draw (0,1) -- (-{sqrt(3)},0); \draw (-{sqrt(3)},0) -- (0,-1); \draw (0,1) to [bend left] (0,-1); \draw (0,1) to [bend right] (0,-1);
\node at (0,-2) {\large $H/v_3v_4$};
\end{tikzpicture}
    \end{tabular}
}
    \caption{Graphs $H, H\setminus v_3v_4$, and $H/v_3v_4$.} \label{fig:G1G2H}
\end{figure}
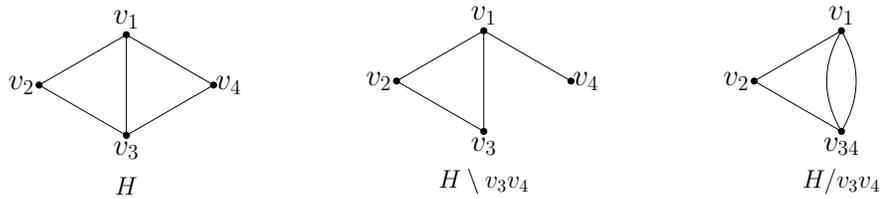 

We remark that this deletion-contraction recurrence gives a connection between the acyclic orientation polynomial and the \emph{$\VV$-polynomial}; see Appendix~\ref{sec:appendix}.

\section{Four expressions for acyclic orientation polynomials}\label{sec:Expression}
%%%%%%%%%%%%%%%%%%%%%%%%%%%%%%%%%%%%%%%%%%%%%%%%%%%%%%%%%%%%%%%%%%%%%%%%%%%%%%%
\subsection{Subgraph expansions}\label{sec:Sub} We will expand our acyclic orientation polynomial $A_G(V)$ with respect to spanning subgraphs of $G$. Let $S$ be a subset of the edge set $E$. The set $S$ will be identified with the spanning subgraph of $G$ whose edge set is $S$. Denote by $|S|$ the number of edges of $S$. Let $\mathcal{S}(G)$ be the collection of spanning subgraphs of $G$. For an edge $e=u_1u_2 \in E(G)$, define
\[
\mathcal{S}(G)^e=\{S \in \mathcal{S}(G)\, \mid\, e \notin E(S) \}, \mbox{ and } \mathcal{S}(G)_e=\mathcal{S}(G) \setminus \mathcal{S}(G)^e. 
\]
Note that a subgraph in $\mathcal{S}(G)_e$ has a connected component which contains both $u_1$ and $u_2$.

\begin{prop}\label{prop:AC_Sub}
For an edge $e \in E(G)$, $\mathcal{S}(G)^e=\mathcal{S}(G\setminus e)$ and there is a bijection between $\mathcal{S}(G)_e$ and $\mathcal{S}(G/e)$. 
\end{prop}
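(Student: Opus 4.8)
The plan is to deduce both statements directly from the definitions of edge deletion and contraction, using that every subgraph in sight is spanning and is therefore completely recorded by its edge set. First I would dispatch the equality $\mathcal{S}(G)^e = \mathcal{S}(G\setminus e)$: deleting $e$ removes exactly the edge $e$ and changes nothing else, so $E(G\setminus e) = E(G)\setminus\{e\}$ and $V(G\setminus e) = V$. Hence a spanning subgraph of $G\setminus e$ is literally a subset of $E(G)\setminus\{e\}$, that is, a spanning subgraph $S$ of $G$ with $e\notin E(S)$ — which is exactly the defining condition for $S\in\mathcal{S}(G)^e$. No argument beyond unwinding definitions is needed; the two collections coincide.

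For the bijection between $\mathcal{S}(G)_e$ and $\mathcal{S}(G/e)$, take $e=u_1u_2$ to be a non-loop edge, so that $G/e$ is defined (when $e$ is a loop one uses the convention $G/e = G\setminus e$ and the same recipe works). The point to record is that contraction deletes no edge other than $e$ itself: an edge parallel to $e$ merely turns into a loop at the new vertex $v_e$ and still lies in $E(G/e)$. Thus $E(G/e) = E(G)\setminus\{e\}$, even though $V/e = V\setminus\{u_1,u_2\}\cup\{v_e\}$ has only $d-1$ vertices. I would then define $\phi\colon\mathcal{S}(G)_e \to \mathcal{S}(G/e)$ by $\phi(S) = S\setminus\{e\}$, viewing the edge set $S\setminus\{e\}\subseteq E(G)\setminus\{e\} = E(G/e)$ as a spanning subgraph of $G/e$, and $\psi\colon\mathcal{S}(G/e)\to\mathcal{S}(G)_e$ by $\psi(T) = T\cup\{e\}$. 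Since $e\in E(S)$ for every $S\in\mathcal{S}(G)_e$ while $e\notin E(G/e)$, the compositions $\psi\circ\phi$ and $\phi\circ\psi$ are both the identity, so $\phi$ is the required bijection.

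I do not anticipate a real obstacle: the only thing demanding care is the bookkeeping observation that, on spanning subgraphs, ``contract $e$ and then pass to an edge subset'' agrees with ``pass to an edge subset and then contract $e$,'' together with the benign degeneracies (parallel edges becoming loops, and loops under contraction). If it streamlines the ensuing subgraph expansion (Theorem~\ref{thm:AC_Sub}), I would also note in passing that $\phi$ preserves the corank: for $S\in\mathcal{S}(G)_e$ the component of $S$ containing both $u_1$ and $u_2$ becomes the component of $\phi(S)$ containing $v_e$, so $c(\phi(S)) = c(S)$, and since $|\phi(S)| = |S|-1$ while the vertex count also drops by one, $s(\phi(S)) = |\phi(S)| - (d-1) + c(\phi(S)) = s(S)$.
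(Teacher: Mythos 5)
Your argument is correct and is exactly the elaboration of what the paper dismisses as ``straightforward from the definitions'': identifying spanning subgraphs with subsets of $E$, noting $E(G\setminus e)=E(G/e)=E\setminus\{e\}$, and taking $S\mapsto S\setminus\{e\}$ (i.e.\ contraction of $e$ within $S$) with inverse $T\mapsto T\cup\{e\}$. Your added remark that this bijection preserves the corank and the component containing $u_1,u_2$ is also what the paper implicitly uses later in the proof of Theorem~\ref{thm:AC_Sub}.
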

\begin{proof}
Straightforward from the definitions.
\end{proof}

Let $\mathcal{C}(S)$ be the set of connected components of a subgraph $S$. For each connected component $C \in \mathcal{C}(S)$, define
\[
v_C=1-\prod_{v \in V(C)}(1-v) \in\ZZ[V].
\]
Note that if $C$ is a graph consisting of two vertices joined by an edge $e$, then $v_C = v_e$. Let $s(S)$ be the \emph{corank} of $S$ defined as $s(S)=|S|-d+|\mathcal{C}(S)|$. 

\begin{thm}\label{thm:AC_Sub}
The acyclic orientation polynomial of $G=(V,E)$ is given by
\[
A_G(V)=\sum_{S \subseteq E}(-1)^{s(S)}\prod_{C \in \mathcal{C}(S)}v_C.
\]
Hence,
\begin{equation}\label{eq2:Sub}
a_G(t)=\sum_{S \subseteq E}(-1)^{s(S)}{\prod_{C \in \mathcal{C}(S)}{\big(1-(1-t)^{|V(C)|}\big)}}. 
\end{equation}
\end{thm}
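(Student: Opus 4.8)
The plan is to prove Theorem~\ref{thm:AC_Sub} by induction on the number of edges of $G$, using the deletion-contraction recurrence of Theorem~\ref{thm:DC} to carry out the induction step. Write $\widetilde{A}_G(V) = \sum_{S\subseteq E}(-1)^{s(S)}\prod_{C\in\mathcal{C}(S)}v_C$ for the right-hand side; the goal is to show $\widetilde{A}_G(V) = A_G(V)$. First I would treat the base case: if $G$ has no edges, then the only spanning subgraph is $S=\emptyset$, which has $d$ singleton components, corank $s(\emptyset)=0$, and each $v_C = v$ for the corresponding vertex, so $\widetilde{A}_G(V) = \prod_{v\in V}v$; on the other hand there is exactly one (empty) acyclic orientation and all $d$ vertices are sinks, so $A_G(V) = \prod_{v\in V}v$ as well. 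I would also dispose of the loop case separately: if $G$ has a loop $e$, then $A_G(V)=0$, and on the right-hand side the subgraphs split into those containing $e$ and those not, which cancel in pairs (adding a loop to $S$ increases $|S|$ by $1$, leaves $c(S)$ and all the $v_C$ unchanged, hence flips the sign $(-1)^{s(S)}$), giving $\widetilde{A}_G(V)=0$.

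Next I would carry out the inductive step. Pick a non-loop edge $e=u_1u_2\in E$. Partitioning the sum defining $\widetilde{A}_G(V)$ according to whether $e\in S$, the terms with $e\notin S$ sum to $\sum_{S\in\mathcal{S}(G)^e}(-1)^{s(S)}\prod_{C}v_C$, and since $\mathcal{S}(G)^e = \mathcal{S}(G\setminus e)$ and corank and component structure are computed identically in $G$ and $G\setminus e$ for such $S$, this is exactly $\widetilde{A}_{G\setminus e}(V)$. For the terms with $e\in S$, I would use the bijection $\mathcal{S}(G)_e \leftrightarrow \mathcal{S}(G/e)$ of Proposition~\ref{prop:AC_Sub}, which sends $S$ to $S/e$. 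The key bookkeeping facts are: $|S/e| = |S|-1$, the number of components is unchanged ($c(S/e)=c(S)$ since $S$ already had $u_1,u_2$ in one component), the vertex set of $G/e$ has one fewer element, so $s(S/e) = |S/e| - (d-1) + c(S/e) = |S|-1-(d-1)+c(S) = s(S)$; moreover the component of $S$ containing $e$ maps to the component of $S/e$ containing $v_e$, and under the identification \eqref{eq:edge_relation}, $v_e = 1-(1-u_1)(1-u_2)$, one checks $v_{C/e} = 1 - \prod_{w\in V(C/e)}(1-w) = 1-\prod_{v\in V(C)}(1-v) = v_C$ because replacing the two factors $(1-u_1)(1-u_2)$ by the single factor $(1-v_e)$ leaves the product unchanged; all other components and their $v_C$ are untouched. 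Hence the $e\in S$ terms sum to $\sum_{S'\in\mathcal{S}(G/e)}(-1)^{s(S')}\prod_{C'}v_{C'} = \widetilde{A}_{G/e}(V/e)$, where the right side is interpreted in $\ZZ[V]$ via \eqref{eq:edge_relation}. Combining, $\widetilde{A}_G(V) = \widetilde{A}_{G\setminus e}(V) + \widetilde{A}_{G/e}(V/e)$, and by the inductive hypothesis applied to $G\setminus e$ and $G/e$ (both with fewer edges) together with Theorem~\ref{thm:DC}, this equals $A_{G\setminus e}(V) + A_{G/e}(V/e) = A_G(V)$.

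Finally, the specialization \eqref{eq2:Sub} is immediate: setting $v=t$ for all $v\in V$ turns $A_G(V)$ into $a_G(t)$ by definition, and turns each $v_C = 1-\prod_{v\in V(C)}(1-v)$ into $1-(1-t)^{|V(C)|}$.

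I expect the main obstacle to be the careful verification that corank is preserved under contraction, i.e. that $s(S) = s(S/e)$, together with the compatibility of the component polynomials $v_C = v_{C/e}$ under the substitution \eqref{eq:edge_relation}; this is where one must be precise about the fact that $e\in S$ forces $u_1$ and $u_2$ into the same component, so that contracting $e$ drops both the edge count and the vertex count by exactly one while leaving the number of components fixed. Everything else is routine once the deletion-contraction recurrence and Proposition~\ref{prop:AC_Sub} are in hand.
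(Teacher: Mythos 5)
Your proposal is correct and follows essentially the same route as the paper: induction on the number of edges via the deletion-contraction recurrence of Theorem~\ref{thm:DC}, splitting the subgraph sum according to whether $e\in S$, and using Proposition~\ref{prop:AC_Sub} together with the verifications that contraction preserves the corank and that $v_{C}=v_{C/e}$ under the identification \eqref{eq:edge_relation}. Your explicit treatment of the loop case (pairing subgraphs with and without the loop) is a small additional care the paper leaves implicit, but the argument is otherwise the same.
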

\begin{proof}
We prove the theorem by induction on the number of edges, the base case that $G$ has no edges being trivial. Take a non-loop edge $e=u_1u_2 \in E$. Thanks to the deletion-contraction recurrence \eqref{eq:DC} for $A_G(V)$, it is enough to show that
\[
A_{G\setminus e}(V) = \sum_{S\in\mathcal{S}(G)^e} (-1)^{s(S)}\prod_{C \in \mathcal{C}(S)}v_C \quad\mbox{and}\quad A_{G/e}(V/e) = \sum_{S\in\mathcal{S}(G)_e} (-1)^{s(S)}\prod_{C \in \mathcal{C}(S)}v_C.
\]
The first equation follows immediately from Proposition~\ref{prop:AC_Sub} and the induction hypothesis.
To verify the second equation, let $S\in\mathcal{S}(G)_e$ be a subgraph of $G$ containing $e$.
Since contracting $e$ leaves connected components not containing $e$ unchanged, we only consider the connected component $C_e$ of $S$ containing $e$.
Let $S'\in\mathcal{S}(G/e)$ be the subgraph of $G/e$ obtained from $S$ by contracting $e$ and $C'_e$ the connected component containing $v_e$.
Using the relation $1-v_e = (1-u_1)(1-u_2)$ and the fact that $V(C'_e)\setminus\{v_e\}\cup\{u_1,u_2\} = V(C_e)$, we obtain $v_{C_e} = v_{C'_e}$. Since contracting $e$ preserves the corank of $S$, we deduce
\[
(-1)^{s(S)}\prod_{C \in \mathcal{C}(S)}v_C = (-1)^{s(S')}\prod_{C' \in \mathcal{C}(S')}v_{C'},
\]
and then the proof follows from Proposition~\ref{prop:AC_Sub} and the induction hypothesis.
\end{proof}

\begin{ex} Let $P$ be the path graph of length 2. See Figure~\ref{fig:path}. We can compute $\prod_{C \in \mathcal{C}(S)}v_C$ for each spanning subgraph $S$. For instance, let $S=\{v_1 v_2\}$ be the second subgraph in Figure~\ref{fig:path}. The graph $S$ has two connected components whose vertex sets are $\{v_1,v_2\}$ and $\{v_3\}$. Then the corresponding term is
\[
\prod_{C \in \mathcal{C}(S)}v_C = (1-(1-v_1)(1-v_2))v_3.
\]
Note that the corank of each subgraph of $P$ is equal to 0. Using the previous theorem, we have
\begin{align*}
A_P(V) &= (1-(1-v_1)(1-v_2)(1-v_3)) + (1-(1-v_1)(1-v_2))v_3 \\
&\quad + (1-(1-v_2)(1-v_3))v_1 + v_1 v_2 v_3 \\
 &= v_1 + v_2 + v_3 + v_1 v_3.
\end{align*}
\begin{figure}[h]
\centering
\resizebox{\linewidth}{!}{
\begin{tabular}{cccc}
%-------S={12,23}-----------------
\begin{tikzpicture}[scale=0.75,every node/.style={transform shape}]
\fill (0,1) circle (1.5pt);
\fill ({sqrt(3)},0) circle (1.5pt);
\fill (-{sqrt(3)},0) circle (1.5pt);

\node at (0,1.3) {\Large{$v_2$}};
\node at (-{sqrt(3)}-0.355,0) {\Large{$v_1$}};
\node at ({sqrt(3)+0.35},0) {\Large{$v_3$}};

\draw (-{sqrt(3)},0) -- (0,1);
\draw ({sqrt(3)},0) -- (0,1);

\node[scale=1.4] at (0,-1) {$1-(1-v_1)(1-v_2)(1-v_3)$};
\end{tikzpicture}&
%-------S={12}-----------------
\begin{tikzpicture}[scale=0.75,every node/.style={transform shape}]
\fill (0,1) circle (1.5pt);
\fill ({sqrt(3)},0) circle (1.5pt);
\fill (-{sqrt(3)},0) circle (1.5pt);

\node at (0,1.3) {\Large{$v_2$}};
\node at (-{sqrt(3)}-0.355,0) {\Large{$v_1$}};
\node at ({sqrt(3)+0.35},0) {\Large{$v_3$}};

\draw (-{sqrt(3)},0) -- (0,1);
% \draw ({sqrt(3)},0) -- (0,1);

\node[scale=1.4] at (0,-1) {$(1-(1-v_1)(1-v_2))v_3$};
\end{tikzpicture}&
%-------S={23}-----------------
\begin{tikzpicture}[scale=0.75,every node/.style={transform shape}]
\fill (0,1) circle (1.5pt);
\fill ({sqrt(3)},0) circle (1.5pt);
\fill (-{sqrt(3)},0) circle (1.5pt);

\node at (0,1.3) {\Large{$v_2$}};
\node at (-{sqrt(3)}-0.355,0) {\Large{$v_1$}};
\node at ({sqrt(3)+0.35},0) {\Large{$v_3$}};

% \draw (-{sqrt(3)},0) -- (0,1);
\draw ({sqrt(3)},0) -- (0,1);

\node[scale=1.4] at (0,-1) {$(1-(1-v_2)(1-v_3))v_1$};
\end{tikzpicture}&
%-------S={}-----------------
\begin{tikzpicture}[scale=0.75,every node/.style={transform shape}]
\fill (0,1) circle (1.5pt);
\fill ({sqrt(3)},0) circle (1.5pt);
\fill (-{sqrt(3)},0) circle (1.5pt);

\node at (0,1.3) {\Large{$v_2$}};
\node at (-{sqrt(3)}-0.355,0) {\Large{$v_1$}};
\node at ({sqrt(3)+0.35},0) {\Large{$v_3$}};

% \draw (-{sqrt(3)},0) -- (0,1);
% \draw ({sqrt(3)},0) -- (0,1);

\node[scale=1.4] at (0,-1) {$v_1 v_2 v_3$};
\end{tikzpicture}
\end{tabular}
}
    \caption{All spanning subgraphs of $P$ and their corresponding terms.} \label{fig:path}
\end{figure}
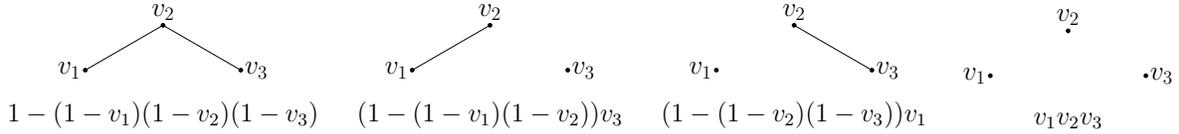
\end{ex}

%%%%%%%%%%%%%%%%%%%%%%%%%%%%%%%%%%%%%%%%%%%%%%%%%%%%%%%%%%%%%%%%%%%%%%%%%%%%%%%
\subsection{Broken circuit complexes}\label{sec:AC_BC}
We will give an expression for $A_G(V)$ in terms of the broken circuit complex $\BB_G$ as an analogue of Whitney's theorem \cite{Whi32}.

Let the edge set $E$ be linearly ordered. A \emph{broken circuit} is a cycle with its smallest edge removed.
The \emph{broken circuit complex} $\BB_G$ of $G$ is the collection of all spanning subgraphs $S$ which do not contain a broken circuit. For a non-loop edge $e \in E$, define
\[
(\BB_G)^e=\{S \in \BB_G\, \mid\, e \notin E(S) \}, \mbox{ and } (\BB_G)_e=\BB_G \setminus (\BB_G)^e. 
\]
The following proposition appears in \cite{BS86} and \cite[Lemma 3.7]{GS01}. 
\begin{prop}\label{prop:AC_BC} 
If $e$ is the largest edge in $E(G)$, then $(\BB_G)^e=\BB_{G\setminus e}$ and there is a bijection between $(\BB_G)_e$ and $\BB_{G / e}$. 
\end{prop}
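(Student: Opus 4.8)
The plan is to mirror the structure of the deletion-contraction analysis for spanning subgraphs (Proposition~\ref{prop:AC_Sub}), but now restricted to the subcomplex of broken-circuit-free subgraphs, using crucially the hypothesis that $e$ is the \emph{largest} edge of $E(G)$. First I would handle the equality $(\BB_G)^e = \BB_{G\setminus e}$. As sets of edge subsets these are $\{S\subseteq E\setminus e : S \text{ contains no broken circuit of } G\}$ versus $\{S\subseteq E\setminus e : S \text{ contains no broken circuit of } G\setminus e\}$, so it suffices to show that a subgraph $S$ avoiding $e$ contains a broken circuit of $G$ if and only if it contains a broken circuit of $G\setminus e$. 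Every cycle of $G\setminus e$ is a cycle of $G$, and since linearly ordering $E(G\setminus e)$ by restriction is compatible with the order on $E(G)$, the broken circuits of $G\setminus e$ are exactly those broken circuits of $G$ that do not involve $e$. The only broken circuits of $G$ that could involve $e$ are those coming from cycles through $e$; but since $e$ is the maximum edge, it is never the smallest edge of a cycle, so if a cycle passes through $e$ then $e$ survives in the associated broken circuit. Hence a broken circuit contained in an $e$-avoiding $S$ cannot use $e$, so it is already a broken circuit of $G\setminus e$; the converse is immediate. This gives the first claim.

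Next I would construct the bijection $(\BB_G)_e \to \BB_{G/e}$. The natural candidate is contraction: $S \mapsto S/e$, sending an edge set $S\ni e$ to $S\setminus\{e\}$ regarded as a subgraph of $G/e$ (edges of $G$ parallel to $e$ become loops). By Proposition~\ref{prop:AC_Sub} this is already a bijection $\mathcal{S}(G)_e \to \mathcal{S}(G/e)$ at the level of all spanning subgraphs, so the content is that it restricts to a bijection between the two broken-circuit complexes, i.e. that for $S\in\mathcal{S}(G)_e$ one has $S\in\BB_G$ if and only if $S/e\in\BB_{G/e}$. Here I would order $E(G/e) = E(G)\setminus\{e\}$ by the inherited linear order. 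One direction: if $S/e$ contains a broken circuit $B'$ of $G/e$, lift the corresponding cycle $C'$ of $G/e$ back to $S$; since $S$ contains $e$ and the endpoints of $e$ are identified in $G/e$, the preimage of $C'$ in $S$ is either a cycle $C$ of $G$ not using $e$, or a cycle obtained by inserting $e$ into the lifted path. In either case, because $e$ is the largest edge, the minimum edge of the lifted cycle is the same as the minimum edge of $C'$ (inserting $e$ cannot lower the minimum), so removing that minimum edge yields a broken circuit of $G$ contained in $S$, contradicting $S\in\BB_G$. Conversely, if $S\in\mathcal{S}(G)_e$ contains a broken circuit $B$ of $G$, arising from a cycle $C$ of $G$: if $C$ does not use $e$ then $C$ descends to a cycle of $G/e$ (or a shorter cycle/loop if $C$ meets the $e$-path, but one still extracts a broken circuit) and its image broken circuit lies in $S/e$; if $C$ uses $e$, then since $e$ is maximal it is not the smallest edge of $C$, so $B = C\setminus\{\min C\}$ still contains $e$, and contracting $e$ turns $C$ into a closed walk in $G/e$ containing a cycle $C''$ whose minimum edge equals $\min C$, and $B/e \supseteq C''\setminus\{\min C\}$ is a broken circuit of $G/e$ contained in $S/e$. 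In all cases $S\in\BB_G \Leftrightarrow S/e\in\BB_{G/e}$, so the contraction map is the desired bijection.

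The main obstacle, and where I would spend the most care, is the bookkeeping when the cycle $C$ \emph{does} pass through $e$, or more generally when contracting $e$ causes a cycle of $G$ to collapse onto the single vertex $v_e$ or to create parallel/loop edges in $G/e$. One must check that in every such degenerate configuration the minimum-edge structure is preserved (so that ``broken circuit'' maps to ``broken circuit'' in both directions), and this is exactly the place where the maximality of $e$ is used — without it, $e$ could be the smallest edge of a cycle and the correspondence between broken circuits would break. Since this proposition is quoted from \cite{BS86} and \cite[Lemma 3.7]{GS01}, I would either give this cycle-chasing argument in full or simply cite those references; in the paper I expect the authors take the latter route.
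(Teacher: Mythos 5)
Your proposal is correct, but it cannot be compared with a proof in the paper because the paper gives none: Proposition~\ref{prop:AC_BC} is stated with only the citations \cite{BS86} and \cite[Lemma 3.7]{GS01}, exactly as you anticipated in your last paragraph. What you have written is essentially the standard argument behind those citations, and it is sound. The deletion half is clean: since $e$ is maximal it is never the deleted (smallest) edge of a cycle, so every broken circuit of $G$ avoiding $e$ comes from a cycle avoiding $e$, giving $(\BB_G)^e=\BB_{G\setminus e}$. For the contraction half, using the bijection $S\mapsto S\setminus\{e\}$ from Proposition~\ref{prop:AC_Sub} and checking $S\in\BB_G\Leftrightarrow S/e\in\BB_{G/e}$ in both directions is the right reduction, and your key observation --- that inserting or removing $e$ cannot change the minimum edge of a cycle, because $e$ is maximal --- is precisely where the hypothesis enters. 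The only places needing the care you flagged are the degenerate configurations: when a lifted cycle of $G/e$ needs $e$ inserted to close up, when a cycle of $G$ avoiding $e$ passes through both endpoints of $e$ and its image splits into two cycles sharing $v_e$ (one takes the one not containing $\min C$ and passes to its broken circuit), and when edges parallel to $e$ become loops in $G/e$ (there both $(\BB_G)_e$ and $\BB_{G/e}$ are empty, consistently). With those cases spelled out, your sketch is a complete proof, and it is a reasonable alternative to the paper's choice of simply citing the references.
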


\begin{thm}\label{thm:AC_BC} The acyclic orientation polynomial of $G=(V,E)$ is given by
\begin{equation}\label{eq:AC_and_BC}
A_G(V)=\sum_{S \in \BB_G}\prod_{C \in \mathcal{C}(S)}v_C.
\end{equation}
\end{thm}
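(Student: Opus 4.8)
### Proof proposal for Theorem~\ref{thm:AC_BC}

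The plan is to run the same induction on the number of edges that proved Theorem~\ref{thm:AC_Sub}, but now using the broken-circuit analogue of the deletion-contraction splitting, namely Proposition~\ref{prop:AC_BC}. The base case, $G$ with no edges, is trivial: $\BB_G$ has a single element (the empty spanning subgraph), and the right-hand side is $\prod_{v\in V}v_{\{v\}}=\prod_{v\in V}v=A_G(V)$.

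For the inductive step I would fix the largest edge $e=u_1u_2$ in $E$. If $e$ is a loop, then $G$ contains the one-edge ``cycle'' $e$, whose smallest (only) edge removed gives the empty broken circuit, so $\BB_G=\varnothing$ and the right-hand side is $0=A_G(V)$; so assume $e$ is not a loop. By Proposition~\ref{prop:AC_BC} we have the disjoint decomposition $\BB_G=(\BB_G)^e\uplus(\BB_G)_e$ with $(\BB_G)^e=\BB_{G\setminus e}$ and a bijection $(\BB_G)_e\to\BB_{G/e}$ given by contracting $e$. The first part gives
\[
\sum_{S\in(\BB_G)^e}\prod_{C\in\mathcal{C}(S)}v_C=\sum_{S\in\BB_{G\setminus e}}\prod_{C\in\mathcal{C}(S)}v_C=A_{G\setminus e}(V)
\]
by the induction hypothesis applied to $G\setminus e$ (which has fewer edges; note the linear order on $E(G\setminus e)$ is inherited, and $e$ being largest in $G$ is exactly what makes Proposition~\ref{prop:AC_BC} applicable, but this is not needed for $G\setminus e$ itself). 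For the second part, I would copy the argument from the proof of Theorem~\ref{thm:AC_Sub} verbatim: if $S\in(\BB_G)_e$ and $S'=S/e\in\BB_{G/e}$, then only the component $C_e$ of $S$ through $e$ is affected, and using $1-v_e=(1-u_1)(1-u_2)$ together with $V(C'_e)\setminus\{v_e\}\cup\{u_1,u_2\}=V(C_e)$ we get $v_{C_e}=v_{C'_e}$; since the edge count and the rest of the product are unchanged, $\prod_{C\in\mathcal{C}(S)}v_C=\prod_{C'\in\mathcal{C}(S')}v_{C'}$. Summing over the bijection and applying the induction hypothesis to $G/e$ yields $\sum_{S\in(\BB_G)_e}\prod_{C\in\mathcal{C}(S)}v_C=A_{G/e}(V/e)$. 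Adding the two parts and invoking Theorem~\ref{thm:DC} gives $A_G(V)=A_{G\setminus e}(V)+A_{G/e}(V/e)=\sum_{S\in\BB_G}\prod_{C\in\mathcal{C}(S)}v_C$, completing the induction.

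The only genuine subtlety — and the place I would be most careful — is the bookkeeping around the linear order and Proposition~\ref{prop:AC_BC}: the broken-circuit complex depends on the chosen edge order, and one must make sure $G\setminus e$ and $G/e$ are fed the induced orders so that the induction hypothesis (which is order-independent as a statement about $A_G$, but whose proof consumed an order) applies cleanly. Since $e$ is chosen to be the largest edge, no broken circuit of $G$ uses $e$ unless... in fact a broken circuit can contain $e$, but then $e$ is not its smallest edge, and the statement of Proposition~\ref{prop:AC_BC} already packages exactly the facts needed; I would simply cite it. Everything else is the routine polynomial identity $v_{C_e}=v_{C'_e}$ already established in the proof of Theorem~\ref{thm:AC_Sub}, so there is no new obstacle beyond faithfully reusing that computation.
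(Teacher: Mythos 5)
Your proposal is correct and takes essentially the same route as the paper's (sketched) proof: induction on the number of edges, splitting $\BB_G$ into $(\BB_G)^e$ and $(\BB_G)_e$ via Proposition~\ref{prop:AC_BC}, reusing the $v_{C_e}=v_{C'_e}$ computation from the proof of Theorem~\ref{thm:AC_Sub}, and concluding with the deletion-contraction recurrence of Theorem~\ref{thm:DC}. If anything, you are slightly more careful than the paper's sketch, which omits the requirement that $e$ be the largest edge (needed for Proposition~\ref{prop:AC_BC}) as well as the loop case and the bookkeeping of induced edge orders.
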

\begin{proof}
The proof is parallel to that of Theorem~\ref{thm:AC_Sub}, so we only sketch the proof.

We induct on the number of edges. Let $e=u_1 u_2$ be a non-loop edge. By the definitions of $(\BB_G)^e$ and $(\BB_G)_e$, the right-hand side of equation \eqref{eq:AC_and_BC} is
\[
\sum_{S \in (\BB_G)^e}\prod_{C \in \mathcal{C}(S)}v_C + \sum_{S \in (\BB_G)_e}\prod_{C \in \mathcal{C}(S)}v_C.
\]
From the deletion-contraction recurrence for $A_G(V)$, it suffices to show that
\[
A_{G\setminus e}(V) = \sum_{S \in (\BB_G)^e}\prod_{C \in \mathcal{C}(S)}v_C, \qand A_{G/e}(V/e) = \sum_{S \in (\BB_G)_e}\prod_{C \in \mathcal{C}(S)}v_C.
\]
The first equation follows from Proposition~\ref{prop:AC_BC}, and the second equation can be verified by a similar argument in the proof of Theorem~\ref{thm:AC_Sub}.
\end{proof}

The linear terms of equation \eqref{eq:AC_and_BC} give the following corollary.
\begin{cor}[{\cite[Theorem 7.3]{GZ83}}]\label{cor:unique}
Let $G=(V,E)$ be a connected graph on $d$ vertices. For any $v\in V$, the number of acyclic orientations of $G$ with a unique sink $v$ is equal to the number of subsets of $E$ of size $d-1$ containing no broken circuit.
\end{cor}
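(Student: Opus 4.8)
The plan is to extract Corollary~\ref{cor:unique} as the \emph{linear part} of the identity \eqref{eq:AC_and_BC} in Theorem~\ref{thm:AC_BC}. First I would observe that, since $G$ is connected, the left-hand side $A_G(V)$ has linear part $\sum_{v\in V} a(G,v)\,v$, where $a(G,v)$ is the number of acyclic orientations whose unique sink is $v$ (a connected graph with a single acyclic orientation has no sink, so the constant term vanishes, and the coefficient of $v$ records exactly orientations with unique sink $v$ by the definition of $A_G(V)$). So the whole content is to compute the linear part of the right-hand side.

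Next I would unwind the right-hand side. For a spanning subgraph $S\in\BB_G$, the monomial $\prod_{C\in\mathcal C(S)} v_C = \prod_{C\in\mathcal C(S)}\bigl(1-\prod_{v\in V(C)}(1-v)\bigr)$ has lowest-degree terms obtained by choosing, from each factor, the degree-$1$ part $\sum_{v\in V(C)} v$ — but if $S$ has $k$ connected components the product of these linear parts has degree $k$, so it contributes to the linear part of the whole sum only when $k=1$, i.e.\ only when $S$ is \emph{connected} and spanning. For such an $S$, $v_S = 1-\prod_{v\in V}(1-v)$ has linear part $\sum_{v\in V} v$. A connected spanning subgraph on $d$ vertices that contains no broken circuit must have exactly $d-1$ edges: it cannot contain a cycle (the cycle's broken circuit would lie in $S$), so it is a spanning tree, hence has $d-1$ edges; conversely any $(d-1)$-edge subset of $E$ with no broken circuit is acyclic, hence a spanning forest, and being broken-circuit-free on $d$ vertices forces connectedness. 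Thus the connected members of $\BB_G$ are precisely the $(d-1)$-subsets of $E$ containing no broken circuit, and each contributes $\sum_{v\in V} v$ to the linear part.

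Putting the two computations together, the coefficient of $v$ on the right-hand side equals the number $N$ of $(d-1)$-subsets of $E$ containing no broken circuit, independent of $v$; on the left-hand side it equals $a(G,v)$, the number of acyclic orientations of $G$ with unique sink $v$. Comparing coefficients of $v$ in \eqref{eq:AC_and_BC} gives $a(G,v)=N$ for every $v\in V$, which is the assertion.

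I do not anticipate a serious obstacle; the one point that needs care is the equivalence ``$S\in\BB_G$ connected spanning $\iff$ $S$ is a $(d-1)$-edge broken-circuit-free subset.'' The subtle direction is that a broken-circuit-free subgraph cannot contain \emph{any} cycle: if $S$ contained a cycle $C_0$ with smallest edge $f$, then $C_0\setminus f$ is a broken circuit contained in $S$, contradiction. This rules out cycles, so each component of $S$ is a tree, and counting edges (a forest on $d$ vertices with $c$ components has $d-c$ edges) converts the edge-count condition into the connectedness condition and vice versa. Everything else is just reading off the lowest-degree homogeneous part of the polynomial identity already proved, so the corollary follows formally.
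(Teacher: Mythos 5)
Your proof is correct and follows essentially the same route as the paper: both extract the linear (degree-one) part of equation~\eqref{eq:AC_and_BC}, note that only connected members of $\BB_G$ can contribute linear terms, and identify these with the $(d-1)$-edge broken-circuit-free subsets. The only difference is cosmetic: you spell out the equivalence ``$S\in\BB_G$ connected $\iff |S|=d-1$'' (broken-circuit-free implies forest, then count edges), which the paper simply asserts.
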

\begin{proof} For $S \in \BB_G$, the graph $S$ is connected if and only if $|S|=d-1$. Since $v_C$ has no constant terms for a connected component $C \in \mathcal{C}(S)$, the degree of each term of $\prod_{C \in \mathcal{C}(S)}v_C$ is greater than $1$ if $|S|<d-1$. Thus, by equation \eqref{eq:AC_and_BC}, the linear terms of $A_G(V)$ are the same as those of   
\[
\sum_{\substack{S \in \BB_G \\|S|=d-1}}v_S = \sum_{\substack{S \in \BB_G \\|S|=d-1}}\left(1-\prod_{v\in V}(1-v)\right).
\]
Therefore, the coefficient of $v \in V$ in $A_G(V)$ is equal to $|\{S \in \BB_G \mid |S|=d-1\}|$.

\end{proof}

\subsection{Bond lattices}\label{sec:AC_LG}
We will express $A_G(V)$ in terms of the bond lattice $L_G$ as an analogue of Birkhoff's Theorem \cite{Bir12}. In this and next subsections, we assume that $G$ is a simple graph, i.e., $G$ has no multiple edges or loops.

We assume that the reader is familiar with the basics of the theory of posets, including the definition of a lattice. A good reference is \cite[Chapter 3]{Sta11}. We only review the M\"obius function.
Let $P$ be a finite poset and $<$ denote the partial order on $P$. The \emph{M\"obius function} $\mu_P$ of $P$ is a map $P \times P \rightarrow \ZZ$ given by
\begin{align*}
\mu_P(s,u) =
\begin{dcases*}
1, & if $s=u$, \\
-\sum_{s\le t<u} \mu_P(s,t), & if $s<u$, \\
0, & otherwise.
\end{dcases*}
\end{align*}
We will use the following proposition, called the \emph{M\"obius inversion formula}; see \cite[Proposition 3.7.1]{Sta11}.
\begin{prop} \label{prop:mobius_inv}
Let $P$ be a finite poset and $R$ be a commutative ring with unity. Let $f,g: P\rightarrow R$. Then
\[
g(s) = \sum_{t\ge s} f(t), \mbox{ for all $s\in P$,}
\]
if and only if
\[
f(s) = \sum_{t\ge s} \mu_P(s,t) g(t), \mbox{ for all $s\in P$.}
\]
\end{prop}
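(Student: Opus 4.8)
The plan is to prove both implications by direct substitution into finite double sums, reducing everything to two summation identities for $\mu_P$. Since $P$ is finite, every sum that appears below is finite, so no convergence question arises. The key lemma I would record first is that for all $s\le u$ in $P$,
\[
\sum_{s\le t\le u}\mu_P(s,t)=\delta_{s,u}=\sum_{s\le t\le u}\mu_P(t,u),
\]
where $\delta_{s,u}=1$ if $s=u$ and $0$ otherwise. The left-hand equality is immediate from the definition of $\mu_P$: it is $\mu_P(s,s)=1$ when $s=u$, and when $s<u$ the defining relation $\mu_P(s,u)=-\sum_{s\le t<u}\mu_P(s,t)$ is exactly the assertion $\sum_{s\le t\le u}\mu_P(s,t)=0$.

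Granting the lemma, the proposition follows quickly. For the forward direction, assume $g(s)=\sum_{t\ge s}f(t)$ for all $s\in P$. Substituting this into the claimed formula for $f$ and reorganizing the resulting sum over pairs $(t,u)$ with $s\le t\le u$ according to the value of $u$ gives
\[
\sum_{t\ge s}\mu_P(s,t)g(t)=\sum_{t\ge s}\mu_P(s,t)\sum_{u\ge t}f(u)=\sum_{u\ge s}f(u)\Bigl(\sum_{s\le t\le u}\mu_P(s,t)\Bigr)=\sum_{u\ge s}f(u)\,\delta_{s,u}=f(s).
\]
Conversely, assuming $f(s)=\sum_{t\ge s}\mu_P(s,t)g(t)$, the same manipulation yields
\[
\sum_{t\ge s}f(t)=\sum_{t\ge s}\sum_{u\ge t}\mu_P(t,u)g(u)=\sum_{u\ge s}g(u)\Bigl(\sum_{s\le t\le u}\mu_P(t,u)\Bigr)=\sum_{u\ge s}g(u)\,\delta_{s,u}=g(s),
\]
now invoking the right-hand equality of the lemma.

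The only step requiring genuine care — and the one I expect to be the main obstacle — is the dual identity $\sum_{s\le t\le u}\mu_P(t,u)=\delta_{s,u}$, since it is not literally the defining recursion (that recursion sums $\mu_P(s,t)$ over the \emph{first} argument, not the second). I would deduce it by the standard incidence-algebra argument: in the incidence algebra of $P$ over $\ZZ$, with convolution $(\alpha\beta)(s,u)=\sum_{s\le t\le u}\alpha(s,t)\beta(t,u)$ and identity $\delta$, the left-hand equality of the lemma says precisely that $\mu_P$ is a left inverse of the zeta function $\zeta(s,t)\equiv 1$; but $\zeta=\delta+(\zeta-\delta)$ with $\zeta-\delta$ nilpotent, since its powers count strictly increasing chains and $P$ is finite, so $\zeta$ admits a two-sided inverse, and a left inverse in a unital associative ring coincides with it. Hence $\mu_P$ is also a right inverse of $\zeta$, which is exactly the dual identity. (Alternatively, the dual identity can be proved directly by induction on the number of elements of the interval $[s,u]$.) Everything else is routine bookkeeping with finite sums.
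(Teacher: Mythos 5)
Your proof is correct: the double-sum interchange in each direction is valid since $P$ is finite, and you rightly identify that the only non-trivial point is the dual identity $\sum_{s\le t\le u}\mu_P(t,u)=\delta_{s,u}$, which you justify properly via the incidence algebra (left inverse of $\zeta$ equals its two-sided inverse, $\zeta-\delta$ being nilpotent). The paper itself gives no proof of this proposition, citing instead \cite[Proposition 3.7.1]{Sta11}; your argument is essentially the standard one found there, with the minor stylistic difference that Stanley works directly with $R^P$ as a module over the incidence algebra (so both directions are literally multiplication by $\mu$ or $\zeta$), whereas you carry out the summation interchange explicitly and invoke the incidence algebra only for the dual identity — the content is the same.
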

For the M\"obius function of a lattice, the following theorem is well-known; see \cite[Corollary (b), p. 351]{ROT64} for the proof.
\begin{thm}[Weisner's theorem] \label{thm:weisner}
Let $L$ be a finite lattice with at least two elements, and $\hat{1}_L$ its greatest element. For $a \in L$ with $a \neq \hat{1}_L$ and any $b\in L$,
\[
\sum_{x\wedge a=b} \mu_L(x,\hat{1}_L) = 0,
\]
where $s\wedge t$ denotes the largest element $p$ satisfying $p \leq s$ and $p \leq t$.
\end{thm}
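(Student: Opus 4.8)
The plan is to prove Weisner's theorem directly from the recursive definition of the M\"obius function together with the M\"obius inversion formula (Proposition~\ref{prop:mobius_inv}), rather than appealing to \cite{ROT64}. Fix $a\in L$ with $a\neq\hat 1_L$ and define $f\colon L\to\ZZ$ by
\[
f(y)=\sum_{\substack{x\in L\\ x\wedge a=y}}\mu_L(x,\hat 1_L).
\]
Since $f(b)$ is precisely the left-hand side of the asserted identity, it suffices to show that $f\equiv 0$ on all of $L$, and I would aim to prove this stronger statement (which is what makes the bookkeeping clean).

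The main step is to evaluate the partial sums $\sum_{y\ge b}f(y)$ for an arbitrary $b\in L$. Because $x\wedge a$ is the greatest lower bound of $\{x,a\}$, the condition $b\le x\wedge a$ is equivalent to the conjunction ``$b\le x$ and $b\le a$''. Therefore
\[
\sum_{y\ge b}f(y)=\sum_{\substack{x\in L\\ x\wedge a\ge b}}\mu_L(x,\hat 1_L)=\sum_{\substack{x\in L\\ b\le x,\ b\le a}}\mu_L(x,\hat 1_L).
\]
If $b\not\le a$, this sum is empty and hence $0$. If $b\le a$, then $b\le a\neq\hat 1_L$ forces $b\neq\hat 1_L$, and the condition $b\le a$ is now automatic, so the sum reduces to $\sum_{b\le x\le\hat 1_L}\mu_L(x,\hat 1_L)$, which vanishes by the standard companion identity $\sum_{b\le x\le\hat 1_L}\mu_L(x,\hat 1_L)=[\,b=\hat 1_L\,]$ (the incidence-algebra fact that $\mu_L$ is a two-sided inverse of the zeta function, equivalent to the defining recursion stated above). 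In every case we conclude $\sum_{y\ge b}f(y)=0$ for all $b\in L$.

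Finally I would invoke M\"obius inversion: setting $g(s)=\sum_{t\ge s}f(t)$, the previous step says $g\equiv 0$, so Proposition~\ref{prop:mobius_inv} gives $f(s)=\sum_{t\ge s}\mu_L(s,t)\,g(t)=0$ for every $s\in L$; taking $s=b$ yields $\sum_{x\wedge a=b}\mu_L(x,\hat 1_L)=0$. The argument is genuinely short, and the only place requiring care is the first step: recognizing that one should prove the vanishing for all $b$ simultaneously, correctly disposing of the vacuous case $b\not\le a$, and using the companion M\"obius identity in place of the form taken as the definition in the excerpt. It is worth noting that the proof uses nothing about $L$ beyond the existence of binary meets and of a top element.
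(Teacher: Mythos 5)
Your argument is correct, and it is genuinely different in character from what the paper does: the paper offers no proof at all, simply citing Rota \cite{ROT64} for Weisner's theorem, whereas you give a short self-contained derivation from the M\"obius inversion formula (Proposition~\ref{prop:mobius_inv}) that the paper already has on hand. The key computation is sound: $b\le x\wedge a$ iff $b\le x$ and $b\le a$, so $\sum_{y\ge b}f(y)$ is either empty (when $b\not\le a$) or equals $\sum_{x\ge b}\mu_L(x,\hat 1_L)$ with $b\neq\hat 1_L$, and then inversion kills $f$ everywhere, including the case actually asserted. The one step you should make honest is the ``companion identity'' $\sum_{b\le x\le \hat 1_L}\mu_L(x,\hat 1_L)=0$ for $b\neq\hat 1_L$: the recursion the paper takes as the definition fixes the \emph{first} argument and sums over the second, so this dual identity is not literally the definition. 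It does follow from the standard incidence-algebra fact that $\mu$ is a two-sided inverse of the zeta function (or by a short downward induction on $b$), and you flag this, but as written it is the only ingredient not already proved or stated in the paper. What your route buys is a proof entirely inside the paper's toolkit (and it proves the slightly stronger statement that the sum vanishes for every $b$ simultaneously, which is exactly how the paper later applies the theorem in Proposition~\ref{prop:lattice}); what the citation buys is brevity and the classical general form of Weisner's theorem without having to introduce the dual M\"obius recursion.
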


Let us now return to the graph theory.
A \emph{vertex partition} $\pi$ is a set of mutually disjoint non-empty subsets of $V$ whose union is $V$. Each $B\in\pi$ is called a \emph{block}.
A \emph{bond} is a vertex partition each of whose blocks induces a connected graph. The set $L_G$ of bonds of $G$ forms a poset partially ordered by refinement. In fact, $L_G$ is a lattice, not just a poset, and we call $L_G$ the \emph{bond lattice} of $G$.
The least element $\hat{0}$ of $L_G$ is the bond each of whose blocks consists of a single vertex, and the greatest element $\hat{1}$ of $L_G$ is the bond each of whose blocks is the vertex set of a connected component of $G$. We write $\mu_G$, instead of $\mu_{L_G}$, for the M\"obius function of $L_G$.
The \emph{M\"obius invariant} of $G$ is defined as $\mu(G)=|\mu_G(\hat{0},\hat{1})|$. Note that $|\mu_G(\hat{0},\pi)|=(-1)^{d-|\pi|}\mu_G(\hat{0},\pi)$ for $\pi \in L_G$. Greene and Zaslavsky~\cite[Theorem~7.3]{GZ83} proved the following theorem via the theory of hyperplane arrangements, and Gebhard and Sagan~\cite{GS00} presented three elementary proofs of this theorem.
\begin{thm} \label{thm:unique2}
Let $G=(V,E)$ be a connected graph. For any $v \in V$, we have $a(G,v)=\mu(G)$.
\end{thm}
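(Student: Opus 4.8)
The plan is to deduce the statement from the broken-circuit expansion of Theorem~\ref{thm:AC_BC} together with Weisner's theorem. First I would extract the linear part of $A_G(V)$ from equation~\eqref{eq:AC_and_BC}. As in the proof of Corollary~\ref{cor:unique}, since each $v_C$ has no constant term, only the spanning subgraphs $S\in\BB_G$ with $c(S)=1$, i.e. $|S|=d-1$, contribute to the coefficient of a single variable $v$. Moreover, for a connected spanning subgraph $S$ on the whole vertex set $V$, the linear part of $v_S=1-\prod_{u\in V}(1-u)$ is $\sum_{u\in V}u$, so the coefficient of $v$ in $A_G(V)$ equals the number of spanning trees of $G$ that contain no broken circuit; call this number $b(G)$. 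Thus it suffices to show $b(G)=\mu(G)$, and this coefficient is visibly independent of the choice of $v$, which already explains why $a(G,v)$ does not depend on $v$.

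Next I would identify $b(G)$ with $|\mu_G(\hat0,\hat1)|$. This is the classical broken-circuit theorem of Whitney/Rota, but to keep the argument self-contained I would reprove it in the only case we need (connected spanning subgraphs) via Weisner's theorem, exactly as the second expression for $\chi_G(n)$ in the introduction specializes. Concretely: from the subgraph expansion of $\chi_G(n)$, $\chi_G(n)=\sum_{\pi\in L_G}\mu_G(\hat0,\pi)n^{|\pi|}$ by Birkhoff's theorem, while Whitney's theorem gives $\chi_G(n)=\sum_{S\in\BB_G}(-1)^{|S|}n^{c(S)}$. Comparing the coefficients of $n^1$ (valid when $G$ is connected, so that $n\mid\chi_G(n)$ and the $n^1$-coefficient is $\mu_G(\hat0,\hat1)$), the left side contributes $(-1)^{d-1}\mu_G(\hat0,\hat1)$ and the right side contributes $(-1)^{d-1}b(G)$; hence $b(G)=\mu_G(\hat0,\hat1)\cdot(-1)^{\,?}$ — tracking signs, $b(G)=(-1)^{d-1}\chi_G'$-type coefficient and $|\mu_G(\hat0,\hat1)|=(-1)^{d-1}\mu_G(\hat0,\hat1)$, so $b(G)=\mu(G)$. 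Alternatively one may run the deletion-contraction recurrence $b(G)=b(G\setminus e)+b(G/e)$ for the largest edge $e$ (this is Proposition~\ref{prop:AC_BC}) against the recurrence $\mu(G)=\mu(G\setminus e)+\mu(G/e)$ and match base cases.

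Finally, assembling these: the coefficient of $v$ in $A_G(V)$ is $b(G)$ for every $v\in V$, and $b(G)=\mu(G)$, which is precisely $a(G,v)=\mu(G)$.

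The main obstacle I anticipate is purely bookkeeping of signs and the hypothesis that $G$ be connected: the identification of the linear coefficient of $\chi_G(n)$ with $\mu_G(\hat0,\hat1)$ uses connectedness (otherwise the lowest-degree term is $n^{c(G)}$), and one must be careful that Theorem~\ref{thm:AC_BC} is stated for arbitrary graphs while Theorem~\ref{thm:unique2} assumes simplicity and connectedness. Everything else is a direct specialization of results already proved in the excerpt, so no genuinely new idea is required — the content is in recognizing that the ``linear truncation'' of the Whitney-type formula for $A_G(V)$ is the Whitney-type formula for the linear coefficient of $\chi_G(n)$, which Birkhoff's theorem identifies with $\pm\mu_G(\hat0,\hat1)$.
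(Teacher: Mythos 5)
The paper gives no proof of Theorem~\ref{thm:unique2} at all: it is quoted from Greene--Zaslavsky (who argued via hyperplane arrangements) and Gebhard--Sagan, and then used as an ingredient in Proposition~\ref{prop:lattice}. Your argument is therefore a genuinely different route, and its main line is correct and non-circular: Theorem~\ref{thm:AC_BC} and Corollary~\ref{cor:unique} are proved in the paper from the deletion--contraction recurrence alone, without invoking Theorem~\ref{thm:unique2}, so the coefficient of the monomial $v$ in $A_G(V)$ is $a(G,v)=b(G)$, the number of broken-circuit-free edge sets of size $d-1$, visibly independent of $v$; and comparing the $n^{1}$-coefficients of Whitney's and Birkhoff's expansions of $\chi_G(n)$ for connected $G$ gives $(-1)^{d-1}b(G)=\mu_G(\hat{0},\hat{1})$, hence $b(G)=|\mu_G(\hat{0},\hat{1})|=\mu(G)$ (your sign bookkeeping, though hesitant, resolves correctly, using that a connected $S\in\BB_G$ is necessarily a spanning tree since NBC sets are forests). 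What this buys is an actual proof of a statement the paper merely cites; the cost is that you import Whitney's and Birkhoff's theorems, which the paper also only cites, and indeed the identity $b(G)=\mu(G)$ \emph{is} Whitney's classical broken-circuit theorem, so the content is a reduction to classical facts rather than a new argument -- note also that Weisner's theorem, announced at the outset, is never used in your main route. One caution: the ``alternative'' deletion--contraction argument is not correct as stated, because for a bridge $e$ the graph $G\setminus e$ is disconnected, so the count of connected NBC sets gives $b(G\setminus e)=0$, while $\mu(G\setminus e)=|\mu_{G\setminus e}(\hat{0},\hat{1})|$ with the paper's definition is a nonzero product over components; the two recurrences match only if one redefines $\mu$ of a disconnected graph to be $0$ (equivalently, works with the $n^{1}$-coefficient of $\chi_G$ throughout), so the Whitney--Birkhoff comparison is the version to keep.
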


Let us take a bond $\pi \in L_G$. Let $G/\pi$ be the graph obtained from $G$ by contracting each block $B \in \pi$ to the vertex $v_B$, and denote by $V/\pi$ the vertex set of $G/\pi$. For each $B \in \pi$, to the vertex $v_B$ we associate the element of $\ZZ[V]$ defined by
\begin{equation}  \label{eq:v_B}
v_B = 1-\prod_{v \in B}(1-v).
\end{equation}
\begin{thm}\label{thm:AC_LG}
The acyclic orientation polynomial of $G=(V,E)$ is given by
\begin{equation}\label{eq:Mobius}
A_G(V)=\sum_{\pi \in L_G}(-1)^{d-|\pi|}\mu_G{(\hat{0},\pi)}\prod_{B \in \pi}{v_B}.
\end{equation}
\end{thm}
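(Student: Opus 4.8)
The plan is to deduce \eqref{eq:Mobius} from the subgraph expansion of Theorem~\ref{thm:AC_Sub} by collecting terms according to the partition that a spanning subgraph induces, exactly mirroring the classical passage from Whitney's theorem to Birkhoff's theorem. For $S \subseteq E$, write $\pi_S = \{V(C) : C \in \mathcal{C}(S)\}$ for the partition of $V$ into the vertex sets of the connected components of $S$. Since each component of $S$ is connected, every block of $\pi_S$ induces a connected subgraph of $G$, so $\pi_S \in L_G$; and since $v_C$ depends only on $V(C)$, we have $\prod_{C \in \mathcal{C}(S)} v_C = \prod_{B \in \pi_S} v_B$. Grouping the sum in Theorem~\ref{thm:AC_Sub} by the value $\pi = \pi_S$ therefore gives
\[
A_G(V) = \sum_{\pi \in L_G} \left( \prod_{B \in \pi} v_B \right) \sum_{\substack{S \subseteq E \\ \pi_S = \pi}} (-1)^{s(S)},
\]
and comparing with \eqref{eq:Mobius}, it remains to establish, for every $\pi \in L_G$, that $\sum_{S:\, \pi_S = \pi} (-1)^{s(S)} = (-1)^{d-|\pi|}\mu_G(\hat 0, \pi)$.

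When $\pi_S = \pi$ we have $|\mathcal{C}(S)| = |\pi|$, so $s(S) = |S| - d + |\pi|$ and $(-1)^{s(S)} = (-1)^{d-|\pi|}(-1)^{|S|}$; hence the target identity reduces to the purely order-theoretic statement
\[
f(\pi) := \sum_{\substack{S \subseteq E \\ \pi_S = \pi}} (-1)^{|S|} = \mu_G(\hat 0, \pi),
\]
which is the standard combinatorial interpretation of the M\"obius function of the bond lattice. To prove it, I would set $g(\pi) = \sum_{\sigma \le \pi} f(\sigma) = \sum_{S:\, \pi_S \le \pi} (-1)^{|S|}$. The condition $\pi_S \le \pi$, i.e.\ that $\pi_S$ refines $\pi$, is equivalent to $S \subseteq E_\pi$, where $E_\pi$ is the set of edges of $G$ with both endpoints in a common block of $\pi$; thus $g(\pi) = \sum_{S \subseteq E_\pi} (-1)^{|S|}$, which equals $1$ if $E_\pi = \emptyset$ and $0$ otherwise. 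Because $G$ is simple (no loops) and each block of a bond induces a connected subgraph, $E_\pi = \emptyset$ exactly when $\pi = \hat 0$. Hence $g(\hat 0) = 1$ and $g(\sigma) = 0$ for $\sigma \ne \hat 0$, and applying the M\"obius inversion formula in its dual form --- that is, Proposition~\ref{prop:mobius_inv} applied to the opposite poset $L_G^{*}$, which says that $g(\pi) = \sum_{\sigma \le \pi} f(\sigma)$ for all $\pi$ if and only if $f(\pi) = \sum_{\sigma \le \pi} \mu_G(\sigma,\pi) g(\sigma)$ for all $\pi$ --- yields $f(\pi) = \mu_G(\hat 0, \pi)\, g(\hat 0) = \mu_G(\hat 0, \pi)$, which is exactly what is needed.

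I do not expect a genuine obstacle: the whole argument is the classical Whitney-to-Birkhoff reduction transported through the substitution $n \mapsto -v_B$ indicated in the introduction, and an alternative inductive route via the deletion--contraction recurrence of Theorem~\ref{thm:DC} (as used for Theorems~\ref{thm:AC_Sub} and~\ref{thm:AC_BC}) would also work but is less transparent, since $L_{G\setminus e}$, $L_{G/e}$, and $L_G$ interact somewhat awkwardly. The points that need care are all bookkeeping: verifying that $\pi_S$ really lies in $L_G$ and that $\pi_S \le \pi$ is equivalent to $S \subseteq E_\pi$; keeping the sign straight when $s(S)$ is rewritten in terms of $|S|$ and the factor $(-1)^{d-|\pi|}$ is pulled out of the sum; and remembering that we use the dual form of Proposition~\ref{prop:mobius_inv}. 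A quick check on a small graph such as $K_2$, where the right-hand side of \eqref{eq:Mobius} equals $v_1+v_2$, confirms that the signs are correct.
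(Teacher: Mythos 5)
Your proposal is correct, but it follows a genuinely different route from the paper. You derive Theorem~\ref{thm:AC_LG} directly from the subgraph expansion of Theorem~\ref{thm:AC_Sub}: grouping spanning subgraphs $S$ by the induced bond $\pi_S$, rewriting $(-1)^{s(S)}=(-1)^{d-|\pi|}(-1)^{|S|}$ on each fiber, and then proving the bond-lattice identity $\sum_{S:\pi_S=\pi}(-1)^{|S|}=\mu_G(\hat 0,\pi)$ by dual M\"obius inversion, using that $\sum_{S\subseteq E_\pi}(-1)^{|S|}$ vanishes unless $E_\pi=\emptyset$, i.e.\ unless $\pi=\hat 0$ (valid since $G$ is simple in this subsection and blocks of a bond are connected). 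This is the classical Whitney-to-Birkhoff reduction transported through the substitution $n\mapsto -v_B$, and all the steps check out, including the sign bookkeeping and the appeal to Proposition~\ref{prop:mobius_inv} on the opposite poset. The paper instead gives two proofs with different key inputs: the first applies M\"obius inversion to $f(\pi)=(-1)^{|\pi|}A_{G/\pi}(V/\pi)$ and $g(\pi)=(-1)^{|\pi|}\prod_{B\in\pi}v_B$, with the crucial vanishing statement (Proposition~\ref{prop:lattice}) proved via the Greene--Zaslavsky unique-sink theorem (Theorem~\ref{thm:unique2}) and Weisner's theorem; the second uses the sink-component map and Theorem~\ref{thm:7.4} together with inclusion--exclusion. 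What your approach buys is economy and self-containedness: it needs only Theorem~\ref{thm:AC_Sub} (already proved by deletion--contraction) plus a standard M\"obius computation on $L_G$, avoiding Weisner's theorem, the graph $G^U$ construction, and the Greene--Zaslavsky results entirely. What the paper's proofs buy is a tighter link between the polynomial identity and acyclic-orientation enumeration ($a(G,v)=\mu(G)$ and the sink-component map), which are the same ingredients reused for Theorem~\ref{thm:AC_SD} and the expression~\eqref{eq:a(G,U)}; your route would still need those facts separately to obtain Theorem~\ref{thm:AC_SD}.
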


To prove Theorem~\ref{thm:AC_LG}, we need the following key proposition. For a subset $U\subseteq V$, we denote by $a(G,\subseteq U)$ the number of acyclic orientations of $G$ whose sinks belong to $U$.
\begin{prop} \label{prop:lattice}
For a proper subset $U$ of $V$, we have
\begin{equation} \label{eq:alter_sum_lattice_aG/pi}
\sum_{\pi\in L_G} (-1)^{d-|\pi|}a(G/\pi, \subseteq U/\pi) = 0,
\end{equation}
where $U/\pi$ is the subset of $V/\pi$ corresponding to $U$, that is, $U/\pi = \{ v_{B} \in V/\pi \mid B\cap U \neq \emptyset\}$.
\end{prop}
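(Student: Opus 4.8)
The plan is to expand each summand $a(G/\pi,\subseteq U/\pi)$ by the subgraph formula of Theorem~\ref{thm:AC_Sub}, reorganize the resulting sum over pairs $(\pi,S)$ with $\pi$ a bond and $S$ a spanning subgraph of $G/\pi$, and watch it collapse blockwise through the Möbius identity packaged in Weisner's theorem (Theorem~\ref{thm:weisner}). Throughout, $G/\pi$ denotes the graph with vertex set $\{v_B:B\in\pi\}$ and an edge joining $v_B,v_{B'}$ for each edge of $G$ between $B$ and $B'$; in particular it, and every iterated contraction appearing below, is loopless --- which is exactly what keeps the chromatic-polynomial identities used at the end valid (if contracting a bond instead kept the internal edges of a block as loops, the statement would fail).

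First, for any graph $H$ and any $W\subseteq V(H)$, setting $v=1$ for $v\in W$ and $v=0$ otherwise in Theorem~\ref{thm:AC_Sub} turns its left side into $a(H,\subseteq W)$ and each factor $v_C$ into $1$ or $0$ according as $V(C)$ does or does not meet $W$, yielding $a(H,\subseteq W)=\sum_{S\subseteq E(H)}(-1)^{s(S)}[\text{every component of }S\text{ meets }W]$. Applying this with $H=G/\pi$ and $W=U/\pi$, weighting by $(-1)^{d-|\pi|}$ and summing over $\pi\in L_G$, the left side of \eqref{eq:alter_sum_lattice_aG/pi} becomes a sum, weighted by $(-1)^{d-|\pi|}(-1)^{s(S)}$, over all pairs $(\pi,S)$ with $\pi\in L_G$, $S\subseteq E(G/\pi)$, and every component of $S$ meeting $U/\pi$. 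To such a pair attach the bond $\rho=\rho(\pi,S)\in L_G$ whose blocks are the unions of the $\pi$-blocks lying in the components of $S$: then $\pi\le\rho$, the condition on $S$ is exactly ``every block of $\rho$ meets $U$'', the number of components of $S$ is $|\rho|$, and so the corank of $S$ in $G/\pi$ is $s(S)=|S|-|\pi|+|\rho|$. Conversely, once $\rho$ (all blocks meeting $U$) and $\pi\le\rho$ are fixed, the admissible $S$ are exactly the disjoint unions $\bigsqcup_{D\in\rho}S_D$ with each $S_D$ a spanning connected subgraph of the contraction $(G|_D)/(\pi|_D)$.

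The sign now rearranges as $(-1)^{d-|\pi|}(-1)^{s(S)}=(-1)^d(-1)^{|S|}(-1)^{|\rho|}=\prod_{D\in\rho}(-1)^{|D|+1}(-1)^{|S_D|}$, so the whole sum factors over the blocks of $\rho$, the choice of $\pi\le\rho$ splitting into independent choices $\pi|_D\in L_{G|_D}$. Writing $\Gamma=G|_D$ and $m=|D|$, the factor from a block $D$ is $(-1)^{m+1}\sum_{\sigma\in L_\Gamma}\big(\sum_{S}(-1)^{|S|}\big)$, the inner sum being over spanning connected subgraphs $S$ of $\Gamma/\sigma$. Comparing the subgraph and Birkhoff expansions of the chromatic polynomial of the connected loopless graph $\Gamma/\sigma$ shows that inner sum is the coefficient of $n$ in $\chi_{\Gamma/\sigma}(n)$, which is $\mu_{L_{\Gamma/\sigma}}(\hat0,\hat1)$, and the contraction isomorphism $L_{\Gamma/\sigma}\cong[\sigma,\hat1]$ inside $L_\Gamma$ makes this $\mu_{L_\Gamma}(\sigma,\hat1)$. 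Thus the $D$-factor is $(-1)^{m+1}\sum_{\sigma\in L_\Gamma}\mu_{L_\Gamma}(\sigma,\hat1)$, and by Weisner's theorem applied to $L_\Gamma$ with $a=b=\hat0_\Gamma$ (valid when $m\ge2$) this is $0$ unless $m=1$, in which case it is $1$; that is, the $D$-factor equals $[|D|=1]$. Multiplying over $D\in\rho$, the left side of \eqref{eq:alter_sum_lattice_aG/pi} equals $\sum_\rho\prod_{D\in\rho}[|D|=1]$, summed over bonds $\rho$ whose blocks all meet $U$; the only term that can survive is $\rho=\hat0_{L_G}$, whose blocks all meet $U$ precisely when $V\subseteq U$. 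Since $U\subsetneq V$, the sum is $0$. (For arbitrary $U$ the same computation yields $[U=V]$, which is why properness is the right hypothesis.)

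The crux is the reorganization: one must check that $\rho(\pi,S)$ is a bond, that the fibre over a fixed $\rho$ is exactly the stated product of blockwise spanning connected subgraphs, and that the signs $(-1)^{d-|\pi|}$ and $(-1)^{s(S)}$ recombine block by block --- and then invoke the two standard facts about the bond lattice $L_\Gamma$ used above, namely $\sum_{S\text{ spanning connected}}(-1)^{|S|}=\mu_{L_\Gamma}(\hat0,\hat1)$ and $L_{\Gamma/\sigma}\cong[\sigma,\hat1]$. With these in place the collapse through Weisner's theorem is automatic; the only other thing to watch is the contraction convention flagged at the outset.
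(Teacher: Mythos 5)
Your proof is correct, but it follows a genuinely different route from the paper's. The paper converts each term $a(G/\pi,\subseteq U/\pi)$ into a M\"obius value of an \emph{augmented} graph: it adds an apex vertex $u_0$ joined to every vertex of $U$, uses the bijection $a(G^U,u_0)=a(G,\subseteq U)$ together with the Greene--Zaslavsky theorem (Theorem~\ref{thm:unique2}) and the interval isomorphism to rewrite the sum as $\sum_{\pi\in L_G}\mu_{G^U}(\iota(\pi),\hat 1_{G^U})$, and then kills this by Weisner's theorem applied in $L_{G^U}$ with the nontrivial choice $a=\widetilde{\pi}_0$ (the bond with block $U\cup\{u_0\}$). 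You instead stay inside $G$: you specialize the paper's own subgraph expansion (Theorem~\ref{thm:AC_Sub}) at $0/1$ values to get $a(H,\subseteq W)$, regroup the double sum over $(\pi,S)$ according to the coarser bond $\rho$ recording the components of $S$, factor block by block (your sign bookkeeping $(-1)^{d-|\pi|}(-1)^{s(S)}=\prod_{D}(-1)^{|D|+1}(-1)^{|S_D|}$ and the identification of the fibre over $\rho$ are correct), identify each block factor as $(-1)^{|D|+1}\sum_{\sigma\in L_{G|_D}}\mu_{L_{G|_D}}(\sigma,\hat 1)$ via the classical subgraph/Birkhoff expansions of $\chi$ and $L_{\Gamma/\sigma}\simeq[\sigma,\hat 1]$, and make it vanish for $|D|\ge 2$ --- here your appeal to Weisner with $a=b=\hat 0$ is really just the dual M\"obius recursion, so your use of Weisner is degenerate where the paper's is essential. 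What each approach buys: the paper's argument is shorter but leans on the external Greene--Zaslavsky result $a(G,v)=\mu(G)$ and the apex construction, whereas yours needs neither, relying only on Theorem~\ref{thm:AC_Sub} and classical chromatic-polynomial facts already quoted in the introduction; the price is the combinatorial reorganization over $(\pi,S,\rho)$, whose routine verifications you correctly flag. Your opening remark about the contraction convention is also on point: the proposition (and the paper's proof of Theorem~\ref{thm:AC_LG}) indeed presupposes that contracting a bond discards internal edges rather than keeping them as loops, since otherwise every term with $\pi\neq\hat 0$ would vanish and the identity would fail.
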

\begin{proof}
When $U$ is empty, there is nothing to show, so we suppose that $U$ is a non-empty subset of $V$.

Let $G^U$ be the graph obtained from $G$ by adding an extra vertex $u_0$ and joining $u_0$ and $v$ for each $v\in U$. In other words, $G^U$ is the graph whose vertex set is $V\cup \{u_0\}$ and edge set is $E\cup \{u_0v \mid v\in U\}$.
For $\mathfrak{o} \in \mathcal{A}(G^U,\{u_0\})$, deleting $u_0$ yields an acyclic orientation of $G$ whose sinks are contained in $U$. This procedure is bijective, which gives the following identity:
\begin{equation}\label{eq:unique_and_union}
a(G^U, u_0)=a(G, \subseteq U).
\end{equation}

Define the map $\iota:L_G\to L_{G^U}$ by $\iota(\pi)=\pi\cup\{\{u_0\}\}$ for each $\pi \in L_G$. Thus the lattice $L_G$ is embedded in $L_{G^U}$ via this map. The greatest element of $L_{G^U}$ will be denoted by $\hat{1}_{G^U}$.

For a bond $\pi\in L_G$, we have
\begin{align*}
a(G/\pi, \subseteq U/\pi)
&= a((G/\pi)^{U/\pi}, u_0) && \mbox{by equation~\eqref{eq:unique_and_union},} \\
&= a(G^U/\iota(\pi), u_0) && \mbox{by $(G/\pi)^{U/\pi} \simeq G^U/\iota(\pi)$,} \\
&= \mu(G^U/\iota(\pi)) && \mbox{by Theorem~\ref{thm:unique2}.}
\end{align*}
Since the bond lattice of $G^U/\iota(\pi)$ is isomorphic to the interval $[\iota(\pi), \hat{1}_{G^U}]$ in $L_{G^U}$ and $|\iota(\pi)|=|\pi|+1$, we deduce the following identity:
\[
a(G/\pi, \subseteq U/\pi) = (-1)^{|\pi|} \mu_{G^U}(\iota(\pi), \hat{1}_{G^U}).
\]
Using this identity, we can rewrite equation \eqref{eq:alter_sum_lattice_aG/pi} as
\begin{equation} \label{eq:rewrite_lattice}
\sum_{\pi\in L_G} \mu_{{G^U}}(\iota(\pi), \hat{1}_{G^U})=0.
\end{equation}
Denote by $\widetilde{\pi}_0$ the bond in $L_{G^U}$ consisting of $U\cup \{u_0\}$ and $d-|U|$ singletons. Note that $\widetilde{\pi}_0\neq \hat{1}_{L^G}$ because $U$ is a proper subset of $V$.
Let $\mathcal{M}=\{{\iota(\pi)\wedge\widetilde{\pi}_0} \in L_{G^U}\mid\pi\in L_{G}\}$. Then $L_G$ can be partitioned as $L_G = \cup_{\widetilde{\pi}\in \mathcal{M}} \{\pi\in L_G\mid \iota(\pi)\wedge \widetilde{\pi}_0 = \widetilde{\pi} \}$, which shows that the left-hand side of equation \eqref{eq:rewrite_lattice} is 
\begin{equation}\label{eq:inner}
\sum_{\widetilde{\pi}\in \mathcal{M}} \sum_{\substack{\pi \in L_G \\\iota(\pi)\wedge \widetilde{\pi}_0 = \widetilde{\pi}}} \mu_{{G^U}}(\iota(\pi), \hat{1}_{G^U}).
\end{equation}
For $\widetilde{\pi}\in \mathcal{M}$, one can easily check that for an element $\pi' \in L_{G^U}$ with $\pi'\wedge \widetilde{\pi}_0 = \widetilde{\pi}$, there exists a unique $\pi \in L_G$ such that $\pi'=\iota(\pi)$. Then we apply Theorem~\ref{thm:weisner} to the inner summation in \eqref{eq:inner}, and hence we obtain the desired result \eqref{eq:rewrite_lattice}.
\end{proof}

\begin{proof}[Proof of Theorem~\ref{thm:AC_LG}]
For each $\pi\in L_G$, let $f(\pi) = (-1)^{|\pi|} A_{G/\pi}(V/\pi)$ and $g(\pi) = (-1)^{|\pi|} {\prod_{B\in\pi} v_B}$. Using the assignment \eqref{eq:v_B}, we regard $f(\pi)$ and $g(\pi)$ as elements of $\ZZ[V]$. Then we can restate equation \eqref{eq:Mobius} as
\[
f(\hat{0}) = \sum_{\pi\in L_G} \mu_G(\hat{0},\pi) g(\pi).
\]
By the M\"obius inversion formula, it suffices to show that for each $\pi\in L_G$,
\[
g(\pi) = \sum_{\pi'\ge \pi} f(\pi'),
\]
or equivalently,
\begin{equation} \label{eq:Mobius2}
\prod_{B\in\pi} v_B = \sum_{\pi'\ge\pi} (-1)^{|\pi'|-|\pi|} A_{G/\pi'}(V/\pi').
\end{equation}

We will first show the identity~\eqref{eq:Mobius2} in the case where $\pi=\hat{0}$, and then explain how this proof implies a proof for a general case. For the case where $\pi=\hat{0}$, we need to show that
\begin{equation} \label{eq:Mobius2_base}
\prod_{v\in V} v = \sum_{\pi'\in L_G} (-1)^{d-|\pi'|} A_{G/\pi'}(V/\pi').
\end{equation}
One can easily see that $A_{G/\pi'}(V/\pi')$ is a square-free polynomial in $\ZZ[V]$ for each $\pi'\in L_G$. Then we write
\begin{equation} \label{eq:square-free_poly}
\sum_{\pi'\in L_G} (-1)^{d-|\pi'|} A_{G/\pi'}(V/\pi') = \sum_{U\subseteq V} c_U \prod_{v\in U} v.
\end{equation}
For a proper subset $U\subsetneq V$, let us evaluate the left-hand side of equation~\eqref{eq:square-free_poly} at $v=1$ if $v\in U$, and $v=0$ otherwise. From the definition of $v_B$, we see that $v_B=1$ if $B\cup U \neq \emptyset$, and $v_B=0$ otherwise. Then the quantity $A_{G/\pi'}(V/\pi')$ with this evaluate is equal to $a(G/\pi', \subseteq U/\pi')$, and therefore Proposition~\ref{prop:lattice} says that the left-hand side of equation~\eqref{eq:square-free_poly} with this evaluation equals 0.

Hence the values $c_U$ must be $0$ for all $U \subsetneq V$. If not, there exists $U \subsetneq V$ such that $c_U\ne 0$ and $c_W = 0$ for all $W\subsetneq U$. By the evaluation in the previous paragraph, the right-hand side of equation~\eqref{eq:square-free_poly} is $\sum_{W \subseteq U}{c_W}=0$. Thus, we have $c_U=0$, which is a contradiction.

We claim that $c_V=1$. Since $v_{B}$ is a polynomial of degree $|B|$, the degree of the polynomial $A_{G/\pi}(V/\pi)$ is less than $|V|$ if $G/\pi$ has an edge. So the polynomial $A_{G/\pi'}(V/\pi')$ where $\pi'$ is the greatest element $\hat{1}$ of $L_G$ is the only term which can contribute to $c_V$, and the coefficient of $\prod_{v \in V}v$ in $A_{G/\pi'}(V/\pi')$ is $(-1)^{d-|\pi'|}$, which proves the claim, and the identity~\eqref{eq:Mobius2_base}.

Now fix an arbitrary bond $\pi$ in $L_G$, and take a bond $\pi'\in L_G$ with $\pi'>\pi$. Let $\varphi_\pi:\ZZ[V/\pi]\rightarrow \ZZ[V]$ be the map given by
\[
v_B \longmapsto 1-\prod_{v\in B}(1-v),
\]
and $\varphi_{\pi'}$ similarly.
Also we can think of $\pi'$ as an element of $L_{G/\pi}$, and then define $\varphi_{\pi',\pi}:\ZZ[V/\pi']\rightarrow \ZZ[V/\pi]$ similarly. Then it is easy to check that the following diagram commutes:
\[
\begin{tikzcd}[column sep=1.5em]
\ZZ[V/\pi'] \arrow[dr, "\varphi_{\pi'}"] \arrow[rr, "\varphi_{\pi',\pi}"] && \ZZ[V/\pi] \arrow[dl, "\varphi_{\pi}"] \\
 & \ZZ[V]
\end{tikzcd}
\]

Applying the identity~\eqref{eq:Mobius2_base} to the graph $G/\pi$, we have
\[
\prod_{B\in\pi} v_B = \sum_{\pi'\in L_{G/\pi}} (-1)^{|\pi'|-|\pi|} A_{(G/\pi)/\pi'}(V/\pi')
\]
as polynomials in $\ZZ[V/\pi]$. Applying $\varphi_{\pi}$ on both sides completes the proof since $L_{G/\pi}\simeq [\pi,\hat{1}]$ in $L_G$, and $\varphi_{\pi}\circ\varphi_{\pi',\pi}=\varphi_{\pi'}$.
\end{proof}

Using Theorem~\ref{thm:AC_LG}, we can express $a(G,U)$ in terms of the M\"obius function.
For a subset $U$ of $V$, define 
\[
\mathcal{R}(U)=\{\pi \in L_G \mid B \cap U \ne \emptyset \mbox{ for every } B \in \pi \}.
\]
Extracting the coefficient of $\prod_{v \in U}v$ in equation \eqref{eq:Mobius} yields
\begin{equation}\label{eq:a(G,U)}
a(G,U) = \sum_{\pi \in \mathcal{R}(U)} (-1)^{d-|U|}\mu_G(\hat{0}, \pi).
\end{equation}

\subsection{The map from $\mathcal{A}(G)$ to $L_G$}\label{sec:AC_SD}
In this section, we recall a map from $\mathcal{A}(G)$ to $L_G$ which appeared in \cite[Section 4]{BN20} to obtain an expression of $a(G, \subseteq U)$. After that, we present another proof of Theorem~\ref{thm:AC_LG} using the inclusion-exclusion principle, and give an another expansion of $A_G(V)$ which is an analogue of \cite[Corollary 7.4]{GZ83}.

Let us fix an ordering of $V$, and take an acyclic orientation $\mathfrak{o} \in \mathcal{A}(G)$. For $j \ge 1$, suppose that $B_1, B_2, \dots, B_{j-1} \subseteq V$ are defined.
Denote by $s_j$ the smallest element in $V\setminus \bigcup_{i=1}^{j-1} B_i$. Let $B_j$ be the collection of vertices in $V\setminus\bigcup_{i=1}^{j-1} B_i$ reachable to $s_j$ via a directed path in $\mathfrak{o}$. Define $\pi(\mathfrak{o})=\{B_1, \cdots, B_q\}$, where $q$ is the largest integer with $B_{q} \ne \emptyset$.
Clearly, $\pi(\mathfrak{o})\in L_G$. The blocks in $\pi(\mathfrak{o})$ are called the \emph{sink-components} and the map $\pi\colon \mathcal{A}(G) \to L_G$ is called the \emph{sink-component map} with respect to the ordering on vertices.
(This map connecting bonds and acyclic orientations of a given graph was first introduced by Greene and Zaslavsky \cite{GZ83}. However, their original definition was flawed, invalidating the proofs of the results in \cite[Section 7]{GZ83}. With this corrected definition, their theorems are now valid.)

The following theorem gives a relation between the sink-component map and the M\"obius function.
\begin{thm}[{\cite[Theorem 7.4]{GZ83}}]\label{thm:7.4} For a bond $\pi \in L_G$, the cardinality of the preimage $\{ \mathfrak{o} \in \mathcal{A}(G) \mid \pi(\oo)=\pi \}$ under the sink-component map is equal to $|\mu_G(\hat{0},\pi)|=(-1)^{d-|\pi|}\mu(\hat{0},\pi)$.
\end{thm}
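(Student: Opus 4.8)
The plan is to prove Theorem~\ref{thm:7.4} by combining Theorem~\ref{thm:AC_LG} with the explicit combinatorial description of the sink-component map, via a "refinement" argument. First I would fix an ordering of $V$ and, for each bond $\pi\in L_G$, define $N(\pi)=|\{\mathfrak{o}\in\mathcal{A}(G)\mid\pi(\mathfrak{o})=\pi\}|$. The key observation is that the sinks of an acyclic orientation $\mathfrak{o}$ are precisely the minimal elements (in the fixed vertex ordering) of the sink-components of $\mathfrak{o}$: each block $B_j$ is the set of vertices reaching $s_j$, and since $s_j$ is the smallest unused vertex and $\mathfrak{o}$ is acyclic, $s_j$ is in fact a sink of $\mathfrak{o}$ restricted to $B_j$, hence a sink of $\mathfrak{o}$ once one checks nothing outside $B_j$ points away from it — more carefully, I would show that $\operatorname{Sink}(\mathfrak{o}) = \{s_1,\dots,s_q\}$ where $s_j = \min B_j$. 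Granting this, $A_G(V) = \sum_{\mathfrak{o}\in\mathcal{A}(G)} \prod_{v\in\operatorname{Sink}(\mathfrak{o})} v = \sum_{\pi\in L_G} N(\pi) \prod_{B\in\pi} (\min B)$.

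Next I would compare this with the expansion from Theorem~\ref{thm:AC_LG}, $A_G(V) = \sum_{\pi\in L_G} (-1)^{d-|\pi|}\mu_G(\hat 0,\pi)\prod_{B\in\pi} v_B$ where $v_B = 1-\prod_{v\in B}(1-v)$. The strategy is to extract, from both sides, the coefficient of the \emph{squarefree} monomial $\prod_{v\in U} v$ for a well-chosen $U$, or better, to directly match the two sums block-by-block. Expanding $v_B = \sum_{\emptyset\neq T\subseteq B}(-1)^{|T|-1}\prod_{v\in T} v$, the right-hand side becomes a sum over "set partitions refining $\pi$ blockwise": $\prod_{B\in\pi} v_B = \sum (-1)^{d(\sigma)}\prod_{v\in U(\sigma)} v$ where $\sigma$ picks a nonempty $T_B\subseteq B$ for each $B$. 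Meanwhile the left-hand side $\sum_\pi N(\pi)\prod_{B\in\pi}(\min B)$ is already squarefree but only involves monomials supported on "transversals picking the minimum of each block." Equating coefficients of $\prod_{v\in U} v$ on both sides and Möbius-inverting (over the boolean lattice, or via the structure of $L_G$) should isolate $N(\pi)$ in terms of $(-1)^{d-|\pi|}\mu_G(\hat 0,\pi)$. Concretely, I expect the cleanest route is: show $N(\pi)$ equals the coefficient of $\prod_{B\in\pi}(\min B)$ in $A_G(V)$ after a suitable specialization, then evaluate the Theorem~\ref{thm:AC_LG} expansion at the same specialization.

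An alternative, possibly slicker, approach is to induct on $|V|$ and compare with a known recursion, or to use Theorem~\ref{thm:unique2} ($a(G,v)=\mu(G)$) as the base case: when $\pi=\hat 1$ (one block per component of a connected $G$), $N(\hat 1)$ counts acyclic orientations whose unique sink-component is all of $V$, i.e.\ acyclic orientations with unique sink equal to $\min V$, which is $a(G,\min V)=\mu(G)=(-1)^{d-1}\mu_G(\hat 0,\hat 1)$ by Theorem~\ref{thm:unique2}; then deconstruct a general $\mathfrak{o}$ by peeling off its first sink-component $B_1$ (an induced connected subgraph on which $\mathfrak{o}$ has unique sink $s_1=\min B_1$) and recursing on $G|_{V\setminus B_1}$ with the induced orientation, yielding $N(\pi) = \big(\text{number of acyclic orientations of } G|_{B_1} \text{ with unique sink } \min B_1\big)\cdot N'(\pi\setminus B_1)$ — but one must be careful that edges between $B_1$ and the rest are forced to point into $B_1$, so they contribute a factor of $1$, and that $G|_{B_1}$ connected is exactly what makes $B_1$ a block. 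This recursion matches the product formula $(-1)^{d-|\pi|}\mu_G(\hat 0,\pi) = \prod_{B\in\pi} \mu(G|_B)$ for the Möbius function of $L_G$ (which follows since $L_G \cong \prod_B L_{G|_B}$ restricted below the corresponding join), closing the induction.

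\textbf{Main obstacle.} The delicate point is justifying the block-peeling step rigorously: one must verify that $B_1$ induces a connected subgraph (so it is a legitimate first block), that the restriction of $\mathfrak{o}$ to $G|_{B_1}$ is acyclic with $s_1$ as its \emph{unique} sink, that all edges leaving $B_1$ are oriented \emph{into} $B_1$ (otherwise some vertex outside $B_1$ would reach $s_1$, contradicting the definition of $B_1$), and — most subtly — that the sink-component decomposition of the induced orientation on $G|_{V\setminus B_1}$, computed with the inherited vertex ordering, is exactly $\{B_2,\dots,B_q\}$, so the recursion is well-defined and bijective. Handling the interaction between the global vertex ordering and the recursion, together with confirming $\operatorname{Sink}(\mathfrak{o})=\{\min B_1,\dots,\min B_q\}$, is where the real work lies; everything else is bookkeeping with Theorem~\ref{thm:unique2} and the product decomposition of $L_G$.
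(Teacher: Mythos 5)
Your first (main) route breaks at its ``key observation.'' It is not true that $\operatorname{Sink}(\mathfrak{o})=\{\min B_1,\dots,\min B_q\}$: every sink of $\mathfrak{o}$ is the minimum of its sink-component, but the minimum of a sink-component need not be a sink of $\mathfrak{o}$. Already for a single edge $v_1v_2$ with $v_1<v_2$ and the orientation $\overrightarrow{v_1v_2}$ one gets $B_1=\{v_1\}$ and $B_2=\{v_2\}$, while $\operatorname{Sink}(\mathfrak{o})=\{v_2\}$; so $s_1=v_1$ is a block-minimum that is not a sink (only the \emph{last} block's minimum is guaranteed to be a global sink, which is exactly the fact used in the paper's proof of Proposition~\ref{prop:inclusion}). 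Consequently the identity you want to compare with Theorem~\ref{thm:AC_LG}, namely $A_G(V)=\sum_{\pi}N(\pi)\prod_{B\in\pi}(\min B)$, is false: for the edge above its right-hand side is $v_1v_2+v_1$ while $A_G(V)=v_1+v_2$. The correct orientation-wise expansion attaches the polynomial $v_B=1-\prod_{v\in B}(1-v)$, not the monomial $\min B$, to each block --- but that is Theorem~\ref{thm:AC_SD}, which the paper \emph{deduces from} Theorem~\ref{thm:7.4}, so it cannot serve as input here. (Using Theorem~\ref{thm:AC_LG} itself would not be circular, since its first proof is independent of Theorem~\ref{thm:7.4}, but your coefficient-extraction scheme cannot recover $N(\pi)$ from an identity whose left-hand side is not the sum you wrote.)

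Your alternative block-peeling argument, by contrast, is essentially the right proof; note that the paper does not prove Theorem~\ref{thm:7.4} at all but cites \cite{GZ83} (with the corrected map of \cite{BN20}), so carrying this out would genuinely supply a proof rather than reproduce one. All the facts you flag as delicate do hold: every vertex of $B_1$ reaches $s_1$ by a directed path that stays inside $B_1$ (each intermediate vertex also reaches $s_1$), so $G|_{B_1}$ is connected and $s_1=\min B_1$ is the \emph{unique} sink of $\mathfrak{o}|_{B_1}$; every edge between $B_1$ and $V\setminus B_1$ is forced to point \emph{out of} $B_1$ --- the opposite of what you wrote (``into $B_1$''), and it is your own parenthetical that proves the correct direction, since an edge into $B_1$ would let an outside vertex reach $s_1$; because no edge enters $B_1$, directed paths between vertices of $V\setminus B_1$ never pass through $B_1$, so the sink-component decomposition of $\mathfrak{o}|_{V\setminus B_1}$ under the inherited order is exactly $\{B_2,\dots,B_q\}$, and reassembly is acyclic and bijective. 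This yields $N(\pi)=\prod_{B\in\pi}a(G|_B,\min B)=\prod_{B\in\pi}\mu(G|_B)$ by Theorem~\ref{thm:unique2}, and the M\"obius side matches because the interval $[\hat{0},\pi]$ (not $L_G$ itself, as you phrased it) is isomorphic to $\prod_{B\in\pi}L_{G|_B}$, whence $(-1)^{d-|\pi|}\mu_G(\hat{0},\pi)=\prod_{B\in\pi}(-1)^{|B|-1}\mu_{G|_B}(\hat{0},\hat{1})=\prod_{B\in\pi}\mu(G|_B)$. So: discard the first route, fix the direction of the cross edges, and the second route closes the proof.
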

Using this theorem, we can derive the following equation for $a(G, \subseteq U)$.
\begin{prop}\label{prop:inclusion}
For a subset $U\subseteq V$, $a(G,\subseteq U) = \sum_{\pi \in \mathcal{R}(U)} (-1)^{d-|\pi|}\mu(\hat{0},\pi)$.
\end{prop}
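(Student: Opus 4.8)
The plan is to evaluate the right‐hand side with Theorem~\ref{thm:7.4} and then identify it with $a(G,\subseteq U)$ by choosing the vertex order used to define the sink‑component map in a way adapted to $U$. First I would fix a linear order on $V$ in which every vertex of $U$ precedes every vertex of $V\setminus U$, and use this order to define $\pi\colon\mathcal{A}(G)\to L_G$. Since the fibers $\{\mathfrak{o}\in\mathcal{A}(G)\mid\pi(\mathfrak{o})=\pi\}$ partition $\mathcal{A}(G)$ and each has cardinality $(-1)^{d-|\pi|}\mu_G(\hat{0},\pi)$ by Theorem~\ref{thm:7.4}, we obtain
\[
\sum_{\pi\in\mathcal{R}(U)}(-1)^{d-|\pi|}\mu_G(\hat{0},\pi)=\bigl|\{\mathfrak{o}\in\mathcal{A}(G)\mid\pi(\mathfrak{o})\in\mathcal{R}(U)\}\bigr|,
\]
and this number is independent of the chosen order, even though the individual fibers are not. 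Hence it suffices to show that, for our particular order, $\pi(\mathfrak{o})\in\mathcal{R}(U)$ if and only if $\operatorname{Sink}(\mathfrak{o})\subseteq U$.

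The key will be two elementary facts read off from the definition of $\pi(\mathfrak{o})$. Index the blocks as the algorithm produces them, $\pi(\mathfrak{o})=\{B_1,\dots,B_q\}$, so that $s_j=\min B_j$ and $\min B_1<\dots<\min B_q$. (i) If $v$ is a sink of $\mathfrak{o}$ lying in $B_j$, then $v=\min B_j$: indeed $v$ is joined to $s_j$ by a directed path, which must be trivial since $v$ has no outgoing edge; in particular $\operatorname{Sink}(\mathfrak{o})\subseteq\{\min B_1,\dots,\min B_q\}$. (ii) Every edge between $B_i$ and $B_k$ with $i<k$ is directed from $B_i$ to $B_k$: otherwise a vertex of $B_k$ would reach $s_i$ and, being available at the $i$-th stage, would have been placed in $B_i$. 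It follows from (ii) that any maximal directed path of $\mathfrak{o}$ starting in $B_j$ stays within $B_j\cup\dots\cup B_q$, hence terminates at a sink contained in $B_j\cup\dots\cup B_q$.

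Now the equivalence. Because all vertices of $U$ precede all vertices of $V\setminus U$, any subset of $V$ meeting $U$ has its minimum in $U$. If $\pi(\mathfrak{o})\in\mathcal{R}(U)$, then each $B_j$ meets $U$, so $\min B_j\in U$, and (i) gives $\operatorname{Sink}(\mathfrak{o})\subseteq U$. Conversely, suppose $\operatorname{Sink}(\mathfrak{o})\subseteq U$ but some block $B_j$ satisfies $B_j\cap U=\emptyset$. Then $\min B_j\notin U$, so no vertex of $U$ can still be available at the $j$-th stage (such a vertex would be at least $\min B_j$ in the order, forcing it out of $U$); therefore $U\subseteq B_1\cup\dots\cup B_{j-1}$ and $B_j\cup\dots\cup B_q\subseteq V\setminus U$. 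But a maximal directed path from $\min B_j$ ends at a sink lying in $B_j\cup\dots\cup B_q\subseteq V\setminus U$, contradicting $\operatorname{Sink}(\mathfrak{o})\subseteq U$. Combining this equivalence with the displayed identity yields the proposition.

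The hard part is pinning down the right vertex order: for a generic order the sets $\{\mathfrak{o}\mid\pi(\mathfrak{o})\in\mathcal{R}(U)\}$ and $\{\mathfrak{o}\mid\operatorname{Sink}(\mathfrak{o})\subseteq U\}$ genuinely differ, and Theorem~\ref{thm:7.4} only guarantees that they have the same size; it is the order that places $U$ first, together with the forward‑edge property~(ii), that makes the two sets literally coincide. The only step needing care is the verification of (i) and (ii) — in particular that a directed path cannot leave the upper blocks $B_j\cup\dots\cup B_q$ — and this is short.
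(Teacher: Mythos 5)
Your proof is correct and follows essentially the same route as the paper: fix an ordering in which $U$ precedes $V\setminus U$, invoke Theorem~\ref{thm:7.4} for the fiber sizes, and show that with this ordering $\operatorname{Sink}(\mathfrak{o})\subseteq U$ is equivalent to $\pi(\mathfrak{o})\in\mathcal{R}(U)$. The only cosmetic difference is in the converse direction, where you derive a sink outside $U$ via your forward-edge property (ii) and a maximal directed path, while the paper argues directly that the minimum of the last sink-component is a sink; both verifications are sound.
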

\begin{proof}
Thanks to Theorem~\ref{thm:7.4}, it suffices to show that
\begin{align*}
\mathfrak{o} \in a(G, \subseteq U) \text{ if and only if } \pi(\mathfrak{o}) \in \mathcal{R}(U).
\end{align*}

Consider an ordering on $V$ with the property that each element in $U$ is smaller than any element in $V\setminus U$, and let $\pi\colon \mathcal{A}(G) \to L_G$ be the sink-component map with respect to the ordering.
If there is a sink $v$ of $\mathfrak{o}$ which is not in $U$, then the sink-component containing $v$ does not intersect $U$, which implies that $\pi(\mathfrak{o}) \notin \mathcal{R}(U)$.

Conversely, suppose that $\pi(\mathfrak{o}) \notin \mathcal{R}(U)$. Let $B$ be the last sink-component of $\pi(\oo)$, that is, the smallest vertex $s$ in $B$ is the largest one among the smallest vertices of each block of $\pi(\oo)$. Then by the ordering of $V$, $B$ and $U$ do not intersect.
We claim that $s$ is a sink of $\mathfrak{o}$, which implies that $\mathfrak{o}\notin a(G,\subseteq U)$. If $s$ is not a sink, then there is an edge directed from $s$ to a vertex in some other sink-component by acyclicity of $\mathfrak{o}$. Then $s$ must be in that sink-component which is a contradiction.
\end{proof}
Now we present an alternative proof of Theorem~\ref{thm:AC_LG} using the inclusion-exclusion principle.
\begin{proof}[Second Proof of Theorem~\ref{thm:AC_LG}]
We will prove the identity~\eqref{eq:a(G,U)}, which is equivalent to Theorem~\ref{thm:AC_LG}.
Let $U$ be a subset of $V$, then we have
\begin{align*}
a(G, U) & =\sum_{\substack{W \subseteq U}} (-1)^{|U|-|W|} a(G,\subseteq W) && \mbox{by the inclusion-exclusion principle,} \\
& =\sum_{\substack{W \subseteq U}} (-1)^{|U|-|W|} \sum_{\pi \in \mathcal{R}(W)} (-1)^{d-|\pi|}\mu(\hat{0},\pi) && \mbox{by Proposition~\ref{prop:inclusion},} \\
& =\sum_{\substack{\pi \in \mathcal{R}(U)}} (-1)^{|U|+d-|\pi|} \mu(\hat{0},\pi) \sum_{\substack{W \subseteq U \\ \pi \in \mathcal{R}(W)}} (-1)^{|W|}.
\end{align*}
The inner summation in the right-hand side of the last equation can be simplified as follows: for $\pi\in\RR(U)$,
\[
\sum_{\substack{W \subseteq U \\ \pi \in \mathcal{R}(W)}}
(-1)^{|W|}=\prod_{B \in \pi} \Big( \sum_{\substack{B' \subseteq B\cap U \\ B' \neq \emptyset}}(-1)^{|B'|}\Big) =(-1)^{|\pi|}.
\]
Therefore, we deduce that
\begin{equation*}
a(G, U)
=\sum_{\substack{\pi \in \mathcal{R}(U)}} (-1)^{|U|+d-|\pi|} \mu(\hat{0},\pi) (-1)^{|\pi|}
=\sum_{\substack{\pi \in \mathcal{R}(U)}} (-1)^{d-|U|} \mu(\hat{0},\pi),
\end{equation*}
and hence we obtain the desired result.
\end{proof}

Also the sink-component map, Theorem~\ref{thm:7.4} and Theorem~\ref{thm:AC_LG} give an analogue of \cite[Corollary 7.4]{GZ83} for the chromatic polynomial.
\begin{thm} \label{thm:AC_SD}
The acyclic orientation polynomial of $G=(V,E)$ is given by
\begin{equation} \label{eq:AC_SD}
A_G(V)=\sum_{\mathfrak{o} \in \mathcal{A}(G)}\prod_{ B \in \pi(\mathfrak{o})}v_{B},
\end{equation}
where $\pi$ is the sink-component map.
\end{thm}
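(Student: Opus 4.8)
The plan is to deduce this directly from Theorem~\ref{thm:AC_LG} together with Theorem~\ref{thm:7.4}, by sorting the sum over acyclic orientations according to the value of the sink-component map. Fix the ordering of $V$ used to define $\pi\colon\mathcal{A}(G)\to L_G$. The key observation is that the monomial $\prod_{B\in\pi(\oo)}v_B$ attached to an orientation $\oo$ depends on $\oo$ only through the bond $\pi(\oo)\in L_G$; hence it is constant on each fiber of the sink-component map. Therefore
\[
\sum_{\oo\in\mathcal{A}(G)}\prod_{B\in\pi(\oo)}v_B
=\sum_{\pi\in L_G}\,\bigl|\{\oo\in\mathcal{A}(G)\mid\pi(\oo)=\pi\}\bigr|\,\prod_{B\in\pi}v_B.
\]

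Next I would substitute the count of each fiber. By Theorem~\ref{thm:7.4}, the number of acyclic orientations $\oo$ with $\pi(\oo)=\pi$ equals $|\mu_G(\hat 0,\pi)|=(-1)^{d-|\pi|}\mu_G(\hat 0,\pi)$. Plugging this into the displayed identity gives
\[
\sum_{\oo\in\mathcal{A}(G)}\prod_{B\in\pi(\oo)}v_B
=\sum_{\pi\in L_G}(-1)^{d-|\pi|}\mu_G(\hat 0,\pi)\prod_{B\in\pi}v_B,
\]
and the right-hand side is exactly $A_G(V)$ by Theorem~\ref{thm:AC_LG}. This completes the argument.

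There is essentially no obstacle here once Theorems~\ref{thm:AC_LG} and~\ref{thm:7.4} are available: the entire content of the statement is repackaged in those two results, and the present proof is just the bookkeeping that links them. The only point worth a remark is that $\pi(\oo)$, and hence the statement, a priori depends on the chosen vertex ordering; since Theorem~\ref{thm:7.4} is valid for the sink-component map associated to \emph{any} ordering, the identity \eqref{eq:AC_SD} holds verbatim for every choice, so no genericity assumption on the ordering is needed. (One could alternatively attempt a self-contained deletion-contraction proof mirroring Theorems~\ref{thm:AC_Sub} and~\ref{thm:AC_BC}, but matching the recurrence with the behavior of the sink-component map under edge contraction is more delicate than simply quoting Theorem~\ref{thm:7.4}, so I would not pursue that route.)
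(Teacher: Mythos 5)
Your proposal is correct and is essentially the paper's own proof: both combine Theorem~\ref{thm:AC_LG} with Theorem~\ref{thm:7.4} by grouping the acyclic orientations according to the fibers of the sink-component map (the paper just writes the same chain of equalities in the opposite direction). Your remark that the identity holds for any choice of vertex ordering is a fair, if implicit, point in the paper as well.
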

\begin{proof}
A proof of the theorem follows from
\begin{align*}
A_G(V) &= \sum_{\pi\in L_G} (-1)^{d-|\pi|} \mu_G(\hat{0}, \pi) \prod_{B\in\pi} v_B && \mbox{by Theorem~\ref{thm:AC_LG},} \\
&= \sum_{\pi\in L_G} \sum_{\substack{\oo\in\AO(G) \\ \pi(\oo)=\pi}} \prod_{B\in\pi} v_B && \mbox{by Theorem~\ref{thm:7.4},} \\
&= \sum_{\oo\in\AO(G)}\prod_{B\in\pi(\oo)} v_B.
\end{align*}
\end{proof}

\section{Chromatic symmetric functions and Stanley's sink theorem}\label{sec:Stanley}
In this section, we present a \emph{new} proof for \cite[Theorem 3.3]{Sta95}, which expresses the number of acyclic orientations with a fixed number of sinks as the sum of the coefficients of elementary symmetric functions $e_{\lambda}$ with a fixed length in the expansion of the chromatic symmetric function. Our proof does not require the theory of quasi-symmetric functions and $P$-partitions, which was used in the original proof of \cite[Theorem 3.3]{Sta95}.

We begin with the definition of the \emph{chromatic symmetric function} $X_G$ of a graph $G=(V,E)$. Let $x_1,x_2, \dots$ be commuting indeterminates. A \emph{proper
coloring} $\kappa$ of $G$ is a function $\kappa:V \to \{1,2,3, \dots \}$ such that $\kappa(v) \ne \kappa(v') $ whenever $v,v'$ are adjacent.
\begin{defn}[{\cite[Definition 2.1]{Sta95}}] The \emph{chromatic symmetric function} $X_G$
is defined as
\[
X_G=\sum_{\kappa}{\prod_{v \in V}x_{\kappa(v)}},
\] 
where the sum is over all proper colorings $\kappa$ of $G$.
\end{defn}

We will expand the symmetric function $X_G$ in terms of power sum symmetric functions and elementary symmetric functions. For $n \geq 1$, the $n$-th \emph{power sum symmetric function} $p_n$ and \emph{elementary symmetric function} $e_n$ are 
\[
p_n=\sum_{i \geq 1}{x_i^n} \quad\mbox{ and }\quad e_n=\sum_{i_1<\cdots<i_n}x_{i_1} \cdots x_{i_n},
\]
with $p_0=e_0=1$.
A sequence $\alpha= (\alpha_1 ,\alpha_2,  \dots , \alpha_j)$ of positive integers with $\alpha_1+\dots+\alpha_j=n$ is a \emph{composition} of $n$, and the set of compositions of $n$ is denoted by $\operatorname{Comp}(n)$. 
The \emph{length} of $\alpha$ is defined by $\ell(\alpha)=j$. 
A composition $\lambda=(\lambda_1,\dots,\lambda_j)$ is called a \emph{partition} if $\lambda_1\ge \lambda_2 \ge \cdots \ge \lambda_j$, and we write $\lambda\vdash n$.
For a composition $\alpha=(\alpha_1,\dots,\alpha_j)$, define 
\[
p_\alpha=\prod_{i=1}^{j}p_{\alpha_i}=p_{\alpha_1}\cdots p_{\alpha_j}, \quad\mbox{ and }\quad e_\alpha=\prod_{i=1}^{j}e_{\alpha_i}=e_{\alpha_1}\cdots e_{\alpha_j}.
\]
Note that $\{p_{\lambda} \mid \lambda\vdash n \}$ and $ \{e_{\lambda} \mid \lambda\vdash n \}$ form bases for the space of homogeneous symmetric functions of degree $n$.

Let us collect expansions of $X_G$ with respect to the power sum symmetric functions $p_\lambda$:
\begin{align}
X_G &= \sum_{S \subseteq E}(-1)^{|S|}p_{\lambda(S)} &&  \mbox{{\cite[Theorem 2.5]{Sta95}}} \label{thm:2.5} \\  
&= \sum_{S \in \BB_G}(-1)^{|S|}p_{\lambda(S)} &&  \mbox{{\cite[Theorem 2.9]{Sta95}}} \label{thm:2.9} \\ 
&= \sum_{\pi \in L_G}\mu(\hat{0},\pi)p_{{\operatorname{type}(\pi)}} && \mbox{{\cite[Theorem 2.6]{Sta95}}} \label{thm:2.6} \\ 
&= \sum_{\mathfrak{o} \in \mathcal{A}(G)}(-1)^{d-|\pi(\mathfrak{o})|}p_{{\operatorname{type}(\pi(\mathfrak{o}))}} && \mbox{{\cite[Proposition 5.2]{BN20}}} \label{prop:5.2},  
\end{align}
where $\lambda(S)$ and $\operatorname{type}(\pi)$ are the non-increasing sequences of the sizes of connected components of a spanning subgraph $S$, and elements of a vertex partition $\pi$, respectively. 

The following proposition shows that in the expansion of $p_n$ with respect to $e_{\lambda}$, the sum of coefficients corresponding to partitions of a fixed length is a binomial coefficient up to sign. The proof below uses the generating functions. For its determinantal expression, see e.g. \cite[Exercise I.2.8]{Mac98}.
\begin{prop}\label{prop:power_ele}  Let 
$
p_n=\sum_{\lambda\vdash n}b_{\lambda}e_{\lambda}
$ 
be the expansion of $p_n$ in terms of $e_{\lambda}$. Then 
\[
\sum_{ \substack{\lambda\vdash n \\ \ell(\lambda)=j}}b_{\lambda}=(-1)^{n-j}{{n} \choose j}.
\]
\end{prop}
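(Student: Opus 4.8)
The plan is to encode the length statistic on the elementary-symmetric side by a one-variable specialization, and then evaluate the resulting polynomial using the classical Newton generating function.

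\textbf{Step 1 (reduce to a polynomial identity in one variable).} Since $e_1,e_2,\dots$ are algebraically independent over $\mathbb{Q}$, there is a well-defined $\mathbb{Q}$-algebra homomorphism $\psi$ from the ring of symmetric functions to $\mathbb{Q}[q]$ with $\psi(e_k)=q$ for all $k\ge 1$ (and $\psi(e_0)=1$). Applying $\psi$ to $p_n=\sum_{\lambda\vdash n}b_\lambda e_\lambda$ and using $\psi(e_\lambda)=q^{\ell(\lambda)}$ gives
\[
\psi(p_n)=\sum_{\lambda\vdash n}b_\lambda q^{\ell(\lambda)}=\sum_{j\ge 1}\Big(\sum_{\substack{\lambda\vdash n\\ \ell(\lambda)=j}}b_\lambda\Big)q^{j},
\]
so it suffices to compute the polynomial $\psi(p_n)$ and extract the coefficient of $q^j$.

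\textbf{Step 2 (compute $\psi(p_n)$).} With $E(t)=\sum_{k\ge 0}e_k t^k=\prod_i(1+x_it)$, Newton's identity reads $\sum_{n\ge 1}\frac{(-1)^{n-1}}{n}p_n t^n=\log E(t)$ in the formal power series ring over the symmetric functions. Because $\psi$ is a ring homomorphism and $E(t)-1$ has zero constant term, $\psi$ commutes with $\log$, whence
\[
\sum_{n\ge 1}\frac{(-1)^{n-1}}{n}\psi(p_n)\,t^n=\log\big(\psi(E(t))\big)=\log\!\Big(\frac{1+(q-1)t}{1-t}\Big)=\log\big(1+(q-1)t\big)-\log(1-t).
\]
Expanding the two logarithms as power series in $t$ and comparing the coefficient of $t^n$ gives $\psi(p_n)=(q-1)^n-(-1)^n$.

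\textbf{Step 3 (conclude).} Expanding $(q-1)^n=\sum_{j=0}^n(-1)^{n-j}\binom{n}{j}q^j$, the constant term $(-1)^n$ cancels, so $\psi(p_n)=\sum_{j=1}^n(-1)^{n-j}\binom{n}{j}q^j$; comparing with Step 1 yields $\sum_{\lambda\vdash n,\ \ell(\lambda)=j}b_\lambda=(-1)^{n-j}\binom{n}{j}$ for $1\le j\le n$. I do not anticipate a real obstacle: the only points needing care are the legitimacy of the specialization $\psi$ (algebraic independence of the $e_k$) and that $\psi$ commutes with $\log$ (both sides lie in a formal power series ring with the relevant series having zero constant term); everything else is routine coefficient extraction. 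It is worth noting that the cancelled constant term $(-1)^n$ is exactly why the stated identity should be read for $j\ge 1$, which is the only range used in the sink theorem.
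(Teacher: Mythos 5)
Your proof is correct, and it takes a route that is related to but genuinely different from the paper's. Both arguments hinge on the Newton identity relating $p_n$ to $E(t)=\sum_k e_k t^k$ through $\log E(t)$, but the paper stays unspecialized: it expands $E'(z)/E(z)$ to get the composition expansion $p_n=\sum_{\alpha\in\operatorname{Comp}(n)}(-1)^{n-\ell(\alpha)}\alpha_1 e_\alpha$ and then evaluates $\sum_{\ell(\alpha)=j}\alpha_1=\binom{n}{j}$ by a bijection shifting compositions of $n$ of length $j$ to compositions of $n+1$ of length $j+1$. You instead apply the specialization $e_k\mapsto q$ first, so the generating function collapses to the explicit rational function $\frac{1+(q-1)t}{1-t}$, and the whole statement drops out of $\log(1+(q-1)t)-\log(1-t)$ plus the binomial theorem, with no composition counting. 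In effect you prove the paper's Lemma~\ref{lem:image} directly (your $\psi(p_n)=(q-1)^n-(-1)^n$ is exactly $\phi(p_n)=(-1)^{n-1}(1-(1-t)^n)$ with $q=t$) and read off Proposition~\ref{prop:power_ele} from it, reversing the paper's logical order; this is arguably slicker and would shorten Section~4. What the paper's route buys in exchange is the finer identity $p_n=\sum_\alpha(-1)^{n-\ell(\alpha)}\alpha_1 e_\alpha$, which it reuses later (e.g.\ in the explicit $e$-expansion of $X_H$), whereas your argument only produces the length-aggregated sums. The two points you flag — algebraic independence of the $e_k$ justifying $\psi$, and $\psi$ commuting with $\log$ of a series with zero constant term (coefficientwise extension to $\Lambda[[t]]\to\mathbb{Q}[q][[t]]$) — are indeed the only delicate spots, and both are handled correctly; the restriction to $j\ge 1$ you mention matches the paper's implicit convention.
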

\begin{proof}
The generating function for $(-1)^{n-1}p_n$ is
\begin{align*}
\sum_{n\ge 1} p_n(-z)^{n-1} &= \sum_{i\ge 1} \frac{x_i}{1+x_i z} = \frac{\mathrm{d}}{\mathrm{d}z} \log \prod_{i\ge 1} (1+x_i z) \\
&= \frac{\mathrm{d}}{\mathrm{d}z}\log E(z) = \frac{E'(z)}{E(z)} \\
&= \left( \sum_{n\ge 1} n e_n z^{n-1} \right) \sum_{j\ge 1} \left(- \sum_{n\ge 1} e_n z^n \right)^{j-1},
\end{align*}
where $E(z)=\sum_{n \geq 0}e_n z^n$. Equating the coefficients of $z^{n-1}$ on both sides yields
\[
p_n=\sum_{\alpha \in \operatorname{Comp}(n)}{(-1)^{n-\ell(\alpha)}\alpha_1e_{\alpha}}. 
\]
The proposition then follows from the following identities:
\begin{align*}
\sum_{ \substack{ \alpha \in \operatorname{Comp}(n) \\ \ell(\alpha)=j}}\alpha_1
&= \sum_{ \substack{ \alpha \in \operatorname{Comp}(n) \\ \ell(\alpha)=j}} |\{(a,\alpha_1+1-a,\alpha_2,\alpha_3, \dots) \mid 1 \leq a \leq \alpha_1\}| \\
&= \sum_{ \substack{ \alpha \in \operatorname{Comp}(n+1) \\ \ell(\alpha)=j+1}} 1
= \binom{n}{j}.
\end{align*}
\end{proof}
\begin{ex} For the graph $H$ in Example~\ref{ex:SGF}, the expansion of $X_H$ with respect to $p_{\lambda}$ is given as follows (we write for example $211$ as short for $(2,1,1)$):
\[
X_H=-4p_{4}+6p_{31}+2p_{22}-5p_{211}+p_{1111}.
\]
From the relations established in the proof of Proposition~\ref{prop:power_ele},
\[
p_1=e_1,\, p_2=e_{11}-2e_2,\, p_3=e_{111}-3e_{21}+3e_3,\,p_4=e_{1111}-4e_{211}+4e_{31}+2e_{22}-4e_4,
\]
we derive $X_H=16e_4+2e_{31}$.
\end{ex}

Let $\Lambda$ be the $\mathbb{Q}$-algebra of symmetric functions with $\mathbb{Q}$-coefficients. Since $\{e_n\}_{n \geq 1}$ is algebraically independent and generates $\Lambda$ as a $\mathbb{Q}$-algebra, assigning a value to each $e_n$ determines an algebra homomorphism on $\Lambda$. Let $\mathbb{Q}[t]$ be the ring of polynomials in an
indeterminate $t$ with $\mathbb{Q}$-coefficients. Define the algebra homomorphism $\phi: \Lambda \to \mathbb{Q}[t]$ by $\phi(e_n)=t$ for each $n \geq 1$. Then, for each partition $\lambda$,
\begin{equation}\label{eqn:length}
\phi(e_{\lambda}) = t^{\ell(\lambda)}.
\end{equation}
For this homomorphism $\phi$, the image $\phi(p_n)$ is computed as follows.
\begin{lem}[{\cite[Exercise 7.43]{Sta99}}]\label{lem:image}
The image of $p_n$ under $\phi$ is equal to
\begin{equation*}
\phi(p_n) = (-1)^{n-1}(1-(1-t)^n).
\end{equation*}
\end{lem}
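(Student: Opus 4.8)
The plan is to reuse the expansion of $p_n$ in the $e_\lambda$ basis that was already extracted inside the proof of Proposition~\ref{prop:power_ele}, together with the identity $\phi(e_\lambda)=t^{\ell(\lambda)}$ from \eqref{eqn:length}. From that proof one has
\[
p_n=\sum_{\alpha\in\operatorname{Comp}(n)}(-1)^{n-\ell(\alpha)}\alpha_1 e_\alpha ,
\]
and since $\phi$ is an algebra homomorphism sending every $e_m$ to $t$, applying $\phi$ replaces each $e_\alpha$ by $t^{\ell(\alpha)}$. Grouping compositions by length and using $\sum_{\ell(\alpha)=j}\alpha_1=\binom{n}{j}$ (also established in that proof) gives $\phi(p_n)=\sum_{j=1}^{n}(-1)^{n-j}\binom{n}{j}t^j$. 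It then remains to recognize this as $(-1)^{n-1}(1-(1-t)^n)$: expanding the binomial, $1-(1-t)^n=-\sum_{j=1}^{n}\binom{n}{j}(-t)^j=\sum_{j=1}^{n}(-1)^{j-1}\binom{n}{j}t^j$, and multiplying by $(-1)^{n-1}$ matches the previous display. This last part is a one-line sign check.

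Alternatively, and perhaps more cleanly since it avoids re-deriving the composition expansion, I would apply $\phi$ directly to the generating-function identity $\sum_{n\ge 1}p_n(-z)^{n-1}=E'(z)/E(z)$ from the proof of Proposition~\ref{prop:power_ele}. Extending $\phi$ coefficientwise to formal power series in $z$, one computes $\phi(E(z))=1+t\sum_{n\ge 1}z^n=\frac{1-(1-t)z}{1-z}$, whence
\[
\sum_{n\ge 1}\phi(p_n)(-z)^{n-1}=\frac{\mathrm{d}}{\mathrm{d}z}\log\phi(E(z))=\frac{1}{1-z}-\frac{1-t}{1-(1-t)z}=\sum_{n\ge 1}\bigl(1-(1-t)^n\bigr)z^{n-1},
\]
and comparing the coefficient of $z^{n-1}$ yields $(-1)^{n-1}\phi(p_n)=1-(1-t)^n$, which is the claim.

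I do not expect a genuine obstacle here: both routes are short. The only point meriting (minor) care is the justification that $\phi$ may be pushed through the formal manipulations in $z$ — extracting coefficients and forming the logarithmic derivative — which is immediate because all of these operations are defined degree-by-degree in $z$ and $\phi$ is simply the ring homomorphism applied to each $z^k$-coefficient. I would likely present the first (purely combinatorial) argument as the main proof, as it relies on nothing beyond what is already on the page.
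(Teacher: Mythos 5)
Your main argument is correct and is essentially the paper's own proof: the paper likewise applies $\phi$ to the $e_\lambda$-expansion of $p_n$, invokes Proposition~\ref{prop:power_ele} (whose content you simply re-derive inline via the composition expansion) together with \eqref{eqn:length}, and finishes with the same binomial sign check $\sum_{j=1}^{n}(-1)^{n-j}\binom{n}{j}t^j=(-1)^{n-1}(1-(1-t)^n)$. Your generating-function alternative is also valid, but it is only a minor repackaging of the same computation.
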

\begin{proof}
By Proposition~\ref{prop:power_ele}, for $p_n=\sum_{\lambda\vdash n}{b_\lambda e_\lambda}$,
\[
\phi(p_n) =\sum_{j=1}^{n}\Big(\sum_{\substack{\lambda\vdash n\\ \ell(\lambda)=j}}b_{\lambda}\phi(e_\lambda)\Big)
=\sum_{j=1}^{n}(-1)^{n-j}{n \choose j} t^j =(-1)^{n-1}(1-(1-t)^n).
\] 
\end{proof}

We are ready to prove Stanley's sink theorem {\cite[Theorem 3.3]{Sta95}}. The proof below uses Theorem~\ref{thm:AC_Sub} and equation \eqref{thm:2.5} involving subgraph expansions.
Instead, one can employ Theorem~\ref{thm:AC_BC} and equation~\eqref{thm:2.9} concerning broken circuit complexes, Theorem~\ref{thm:AC_LG} and equation~\eqref{thm:2.6} concerning bond lattices, or Theorem~\ref{thm:AC_SD} and equation~\eqref{prop:5.2} concerning concerning the sink-component map defined in Section~\ref{sec:AC_SD}.

\begin{thm}[{\cite[Theorem 3.3]{Sta95}}]\label{thm:Stanley} Let $X_G=\sum_{\lambda\vdash d} c_{\lambda}e_{\lambda}$
be the expansion of the chromatic symmetric function $X_G$ in terms of the elementary symmetric functions $e_{\lambda}$, and let $\operatorname{sink}(G,j)$ be the number of acyclic orientations of $G$ with $j$ sinks. Then 
\begin{equation}\label{eq:Stanley}
\operatorname{sink}(G,j)=\sum_{\substack{\lambda\vdash d \\ \ell(\lambda)=j}} c_\lambda.
\end{equation}
\end{thm}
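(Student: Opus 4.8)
The plan is to derive Stanley's sink theorem by applying the algebra homomorphism $\phi:\Lambda\to\mathbb{Q}[t]$ to one of the power-sum expansions of $X_G$ and comparing the result with one of the subgraph-type expressions for $a_G(t)$ established in Section~\ref{sec:Expression}. Concretely, I would start from the expansion $X_G=\sum_{\lambda\vdash d}c_\lambda e_\lambda$ in the statement and apply $\phi$; by \eqref{eqn:length} this gives
\[
\phi(X_G)=\sum_{\lambda\vdash d}c_\lambda t^{\ell(\lambda)}=\sum_{j=1}^{d}\Big(\sum_{\substack{\lambda\vdash d\\ \ell(\lambda)=j}}c_\lambda\Big)t^j,
\]
so the right-hand side of \eqref{eq:Stanley} is precisely the coefficient of $t^j$ in $\phi(X_G)$. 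It therefore suffices to show that $\phi(X_G)=a_G(t)$, since then matching coefficients of $t^j$ yields $\operatorname{sink}(G,j)=\sum_{\ell(\lambda)=j}c_\lambda$.

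Next I would compute $\phi(X_G)$ from the power-sum subgraph expansion \eqref{thm:2.5}, namely $X_G=\sum_{S\subseteq E}(-1)^{|S|}p_{\lambda(S)}$. Since $\phi$ is an algebra homomorphism and $p_{\lambda(S)}=\prod_{C\in\mathcal{C}(S)}p_{|V(C)|}$, Lemma~\ref{lem:image} gives
\[
\phi(p_{\lambda(S)})=\prod_{C\in\mathcal{C}(S)}(-1)^{|V(C)|-1}\big(1-(1-t)^{|V(C)|}\big).
\]
The sign collects as $\prod_{C\in\mathcal{C}(S)}(-1)^{|V(C)|-1}=(-1)^{d-c(S)}$, where $c(S)=|\mathcal{C}(S)|$ and $d=|V|$, because $\sum_{C}|V(C)|=d$. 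Combining with the $(-1)^{|S|}$ already present, the total sign on the term for $S$ is $(-1)^{|S|+d-c(S)}=(-1)^{|S|-d+c(S)}=(-1)^{s(S)}$, matching the corank sign in \eqref{eq2:Sub}. Hence
\[
\phi(X_G)=\sum_{S\subseteq E}(-1)^{s(S)}\prod_{C\in\mathcal{C}(S)}\big(1-(1-t)^{|V(C)|}\big),
\]
which is exactly the formula for $a_G(t)$ in equation \eqref{eq2:Sub} of Theorem~\ref{thm:AC_Sub}. This proves $\phi(X_G)=a_G(t)$ and completes the argument.

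The only delicate point is the bookkeeping of signs: one must verify carefully that $(-1)^{|S|}\cdot(-1)^{d-c(S)}$ equals $(-1)^{s(S)}$ with $s(S)=|S|-d+c(S)$, which is immediate since $|S|+d-c(S)\equiv |S|-d+c(S)\pmod 2$. There is no genuine obstacle here — the substance was already placed in Theorem~\ref{thm:AC_Sub} (the ``acyclic orientation'' analogue of the subgraph expansion) and in Lemma~\ref{lem:image} / Proposition~\ref{prop:power_ele} (the evaluation $\phi(p_n)=(-1)^{n-1}(1-(1-t)^n)$); this proof just glues them together through the homomorphism $\phi$, replacing Stanley's quasi-symmetric-function machinery. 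As the excerpt remarks, the same conclusion follows identically from the other three pairs: Theorem~\ref{thm:AC_BC} with \eqref{thm:2.9}, Theorem~\ref{thm:AC_LG} with \eqref{thm:2.6}, or Theorem~\ref{thm:AC_SD} with \eqref{prop:5.2}, since in each case $\phi$ turns the $p$-expansion of $X_G$ term-by-term into the corresponding expression for $A_G(V)$ evaluated at $v=t$. One could equally package the argument at the level of $A_G(V)$ by noting that $\phi(p_n)$ is the image of $v_C$ (for $|V(C)|=n$) under $v\mapsto t$, making the correspondence ``replace $n$ by $-v_C$'' from the introduction precise; but the scalar computation above is the most economical route.
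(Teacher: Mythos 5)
Your proposal is correct and follows essentially the same route as the paper's own proof: apply $\phi$ to the $e$-expansion to get $\sum_{\lambda}c_\lambda t^{\ell(\lambda)}$, then apply $\phi$ to the subgraph $p$-expansion \eqref{thm:2.5}, use Lemma~\ref{lem:image} and the corank sign bookkeeping to recover \eqref{eq2:Sub}, and conclude via Theorem~\ref{thm:AC_Sub} that $\phi(X_G)=a_G(t)$. The paper's proof is the same argument, and it likewise notes the alternative pairings with Theorems~\ref{thm:AC_BC}, \ref{thm:AC_LG}, \ref{thm:AC_SD}.
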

\begin{proof}
The statement that equation~\eqref{eq:Stanley} holds for every $j \geq 1$ is equivalent to
\begin{equation}\label{eq:A_G_ele}
a_G(t)=\sum_{\lambda\vdash d}c_{\lambda}t^{\ell(\lambda)}. 
\end{equation}
To prove equation~\eqref{eq:A_G_ele}, we apply $\phi$ to the two expansions of $X_G$ in terms of $e_\lambda$ and $p_\lambda$. By equation \eqref{eqn:length}, 
\[
\phi(X_G)
= 
\phi\left(\sum_{\lambda\vdash d} c_{\lambda}e_{\lambda} \right)
=
\sum_{\lambda\vdash d} c_{\lambda}\phi(e_{\lambda})
=
\sum_{\lambda\vdash d} c_{\lambda}t^{\ell(\lambda)}.
\] 
On the other hand, we obtain
\begin{align*}
\phi(X_G) &= \phi\left( \sum_{S\subseteq E} (-1)^{|S|} p_{\lambda(S)} \right) \\
&= \sum_{S\subseteq E} (-1)^{s(S)} \prod_{C\in\CC(S)} (-1)^{|V(C)|-1}\phi(p_{|V(C)|}) \\
&= \sum_{S\subseteq E} (-1)^{s(S)} \prod_{C\in\CC(S)} \left( 1-(1-t)^{|V(C)|} \right) \\
&= a_G(t),
\end{align*}
where the first equality uses equation \eqref{thm:2.5}, the second one follows from the definition of the corank $s(S)=|S|-d+|\mathcal{C}(S)|$, the third one is verified by Lemma~\ref{lem:image}, and the last one follows from equation \eqref{eq2:Sub}.
This completes the proof.
\end{proof}

\begin{ex}
Let $H$ be the graph in Example~\ref{ex:SGF}. Since $X_H=16e_4+2e_{31}$, the previous theorem gives $\operatorname{sink}(H,1)=16$ and $\operatorname{sink}(H,2)=2$, or equivalently $a_H(t)=16t+2t^2$. This coincides with the computation in Example~\ref{ex:SGF}.
\end{ex}

%%%%%%%%%%%%%%%%%%%%%%%%%%%%%%%%%%%%%%%%%%%%%%%%%%%%%%%%%
%%%%%%%%%%%%%%%%%%%%%%%%%%%%%%%%%%%%%%%%%%%%%%%%%%%%%%%%%
\section{On distinguishing graphs by acyclic orientation polynomials} \label{sec:invariant}

We end this paper with a discussion on how well graphs are distinguished by the polynomials $A_G(V)$ and $a_G(t)$ in comparison to $X_G$. In this section, we assume that $G$ is a simple graph.
Stanley \cite{Sta95} asked the question of whether the chromatic symmetric polynomial $X_G$ distinguishes non-isomorphic trees.
The answer to the question is still unknown, but several graph polynomials \cite{GS01, MMW08} associated with $X_G$ have been introduced in the endeavor to resolve the question. For example, the noncommutative version $Y_G$ of $X_G$ (defined in \cite{GS01}) is a complete isomorphism invariant, i.e., $Y_G$ distinguishes among all simple graphs $G$ (\cite[Proposition 8.2]{GS01}).

The (univariate) polynomial $a_G(t)$ is a \emph{weaker} invariant than $X_G$ by Stanley's sink theorem {\cite[Theorem 3.3]{Sta95}}. Trees with $10$ or fewer vertices are distinguished by $a_G(t)$. But there exist non-isomorphic trees having the identical $a_G(t)$ with $11$ vertices. These two trees shown in Figure~\ref{fig:instances} were introduced in \cite{EG06} as the smallest instances which are not distinguished by the subtree data. 

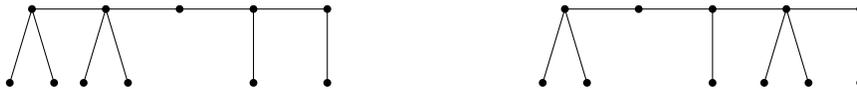
\begin{figure}[h]
\centering
\resizebox{.9\linewidth}{!}{
\setlength{\tabcolsep}{40pt}
    \begin{tabular}{cc}
         \begin{tikzpicture}
         %-------NODES----------------
         \fill (1,1) circle (1.5pt);
         \fill (2,1) circle (1.5pt);
         \fill (3,1) circle (1.5pt);
         \fill (4,1) circle (1.5pt);
         \fill (5,1) circle (1.5pt);
         \fill (1-0.3,0) circle (1.5pt);
         \fill (1+0.3,0) circle (1.5pt);
         \fill (2-0.3,0) circle (1.5pt);
         \fill (2+0.3,0) circle (1.5pt);
         \fill (4,0) circle (1.5pt);
         \fill (5,0) circle (1.5pt);
         %-------EDGES----------------
         \draw (1,1)--(2,1);
         \draw (2,1)--(3,1);
         \draw (3,1)--(4,1);
         \draw (4,1)--(5,1);
         
         \draw (1,1)--(1-0.3,0);
         \draw (1,1)--(1+0.3,0);
         \draw (2,1)--(2-0.3,0);
         \draw (2,1)--(2+0.3,0);
         \draw (4,1)--(4,0);
         \draw (5,1)--(5,0);
         
%%         \node at (3,-0.5) {$T_1$};
         \end{tikzpicture}&
         \begin{tikzpicture}
         %-------NODES----------------
         \fill (1,1) circle (1.5pt);
         \fill (2,1) circle (1.5pt);
         \fill (3,1) circle (1.5pt);
         \fill (4,1) circle (1.5pt);
         \fill (5,1) circle (1.5pt);
         \fill (1-0.3,0) circle (1.5pt);
         \fill (1+0.3,0) circle (1.5pt);
         \fill (3,0) circle (1.5pt);
         \fill (4-0.3,0) circle (1.5pt);
         \fill (4+0.3,0) circle (1.5pt);
         \fill (5,0) circle (1.5pt);
         %-------EDGES----------------
         \draw (1,1)--(2,1);
         \draw (2,1)--(3,1);
         \draw (3,1)--(4,1);
         \draw (4,1)--(5,1);
         
         \draw (1,1)--(1-0.3,0);
         \draw (1,1)--(1+0.3,0);
         \draw (3,1)--(3,0);
         \draw (4,1)--(4-0.3,0);
         \draw (4,1)--(4+0.3,0);
         \draw (5,1)--(5,0);

%%         \node at (3,-0.5) {$T_2$};
         \end{tikzpicture}
    \end{tabular}
}
    \caption{Two non-isomorphic trees with the same $a_G(t)$.}  \label{fig:instances}  
\end{figure}

The (multivariate) acyclic orientation polynomial $A_G(V)$ is a \emph{complete} isomorphism invariant.
To see this, we first assume that $G$ is connected. For two vertices $u,v \in V$, the coefficient of $uv$ in $A_G(V)$ is zero if and only if $u$ and $v$ are adjacent. Hence, the polynomial $A_G(V)$ can recover the original graph $G$.
For any graph $G$, the polynomial $A_G(V)$ has a factorization into those of its connected components, and therefore using the previous argument applied to each component shows that $G$ can be determined by $A_G(V)$.

\section*{Acknowledgment}
We would like to thank Jang Soo Kim, Bruce Sagan and Thomas Zaslavsky for helpful comments and corrections.
We are also grateful to the anonymous referees for many helpful comments regarding correcting errors, and improving expositions. In particular, one referee pointed out the connection (Appendix~\ref{sec:appendix}) of 
the acyclic orientation polynomial with the $\VV$-polynomial.

The fourth author was supported by Basic Science Research Program through the National Research Foundation of Korea(NRF) funded by the Ministry of Education(2020R1A6A3A13076804).

\appendix
\section{The $\VV$-polynomial} \label{sec:appendix}
In this appendix, we present a relation between the acyclic orientation polynomial and the $\VV$-polynomial.

In \cite{EMM11}, to study a relationship between the Tutte polynomial and the Potts model, Ellis-Monaghan and Moffatt introduced the \emph{$\VV$-polynomial} $\VV(G)$ of a graph $G$.
This polynomial is defined recursively by a ``general'' deletion-contraction recurrence. We will show that the acyclic orientation polynomial $A_G(V)$ is a specialization of that polynomial.

Let $G=(V,E)$ be a graph with multiple edges or loops allowed. We may assume that $V=\{v_1,v_2,\dots, v_d\}$. To define the $\VV$-polynomial of G, we need the following data:
\begin{enumerate}
\item[•] $(S,\circ)$ is a torsion-free commutative semigroup.
\item[•] $\omega$ is a $S$-valued vertex weight function, that is, $\omega: V\rightarrow S$.
\item[•] $\xx=\{x_k\}_{k\in S}$ and $\bgamma=\{\gamma_e\}_{e\in E}$ are formal commuting variables.
\end{enumerate}
For a graph $G$ equipped with an $S$-valued vertex weight $\omega$ and $e\in E$, let $G\setminus e$ be the graph obtained from $G$ by removing $e$, leaving the vertex weight unchanged. Then $E(G\setminus e) = E\setminus \{e\}$. When $e$ is not a loop, let $G/e$ be the graph obtained from $G$ by contracting $e$ and changing the vertex weight $\omega$ as follows: if $v_i$ and $v_j$ are the vertices incident to $e$, and $v_e$ is the vertex of $G/e$ created by the contraction, then $\omega(v_e) := \omega(v_i)\circ\omega(v_j)$. Loops are not contracted, and then again $E(G/ e) = E\setminus \{e\}$.
\begin{defn}
The \emph{$\VV$-polynomial} $\VV(G) = \VV(G,\omega; \xx,\bgamma)$ of $G$ is a polynomial in $\ZZ[\xx, \bgamma]$ defined recursively by:
\[
\VV(G) = 
\begin{dcases}
\prod_{i=1}^d x_{\omega(v_i)}, & \mbox{if $G$ consists of $d$ isolated vertices,} \\
\VV(G\setminus e) + \gamma_e \VV(G/e), & \mbox{if $e$ is not a loop,} \\
(\gamma_e+1) \VV(G\setminus e), & \mbox{if $e$ is a loop.}
\end{dcases}
\]
\end{defn}
Due to Ellis-Monaghan and Moffatt, the $\VV$-polynomial is well-defined~\cite[Proposition 3.2]{EMM11}, i.e., it is independent of the order in which the deletion-contraction relation is applied to the edges. They also gave a subgraph expansion of $\VV(G)$~\cite[Theorem 3.3]{EMM11}:
\begin{equation} \label{eq:V_subgraph}
\VV(G) = \sum_{S\subseteq E} \prod_{C\in\CC(S)} x_{\omega(C)} \prod_{e\in S} \gamma_e,
\end{equation}
where $\omega(C) = \omega(v_{i_1})\circ\dots\circ\omega(v_{i_k})$ if $C$ consists of vertices $\{v_{i_1},\dots,v_{i_k}\}$.

Now set $S=\ZZ[V]$, the ring of polynomials in $V$, and define an operator $\circ$ on $S$ by $a\circ b = a + b - ab$ for $a,b\in S$, where addition and multiplication are those in $\ZZ[V]$. Then one can easily check that $(S,\circ)$ forms a torsion-free commutative semigroup. We also define a vertex weight $\omega$ given by $\omega(v) = v\in \ZZ[V]$ for each $v\in V$. Then $\VV(G)$ can be defined with these data.
When we specialize $\VV(G)$ at $x_k=-k$ for each $k\in S$ and $\gamma_e= -1$ for each $e\in E$, the polynomial $\VV(G)$ belongs to $\ZZ[V]$, and satisfies the following recurrence:
\[
\VV(G)\big{|}_{\substack{x_k=-k \\ \gamma_e=-1}} = 
\begin{dcases}
(-1)^d \prod_{i=1}^d v_i, & \mbox{if $G$ consists of $d$ isolated vertices,} \\
\VV(G\setminus e)\big{|}_{\substack{x_k=-k \\ \gamma_e=-1}} - \VV(G/e)\big{|}_{\substack{x_k=-k \\ \gamma_e=-1}}, & \mbox{if $e$ is not a loop,} \\
0, & \mbox{if $G$ has a loop.}
\end{dcases}
\]
Since the operator $\circ$ on $S$ coincides with the relation \eqref{eq:edge_relation}, and the polynomials $A_G(V)$ and $\VV(G)$ with the above specialization satisfy the same recurrence relation (the relation \eqref{eq:DC}) (up to sign) and the initial condition (up to sign), we deduce that
\[
A_G(V) = (-1)^d \VV(G)\big{|}_{\substack{x_k=-k \\ \gamma_e=-1}}.
\]
Also with the same specialization, we could directly obtain the subgraph expansion (Theorem~\ref{thm:AC_Sub}) of $A_G(V)$ from equation \eqref{eq:V_subgraph}.

\bibliographystyle{alpha}
\bibliography{AOS}

\end{document}